\newtheorem{proposition}{Proposition}[section]
\newtheorem{theorem}{Theorem}[section]
\newtheorem{corollary}[theorem]{Corollary}
\title{Cross-interactive residual smoothing for global and block Lanczos-type solvers for linear systems with multiple right-hand sides\thanks{\textbf{Funding:} This work was supported by grant numbers JP16K17639, JP17K12690, JP18H03250, JP18K18064, JP19KK0255, JP20K14356, JP21H03451, and JP21K11925 from the Grants-in-Aid for Scientific Research Program (KAKENHI) of the Japan Society for the Promotion of Science (JSPS).}}
\date{}
\author{Kensuke Aihara\thanks{Department of Computer Science, Faculty of Information Technology, Tokyo City University, 1-28-1 Tamazutsumi, Setagaya-ku, Tokyo 158-8557, Japan 
  ({\tt aiharak@tcu.ac.jp}).}
\and Akira Imakura\thanks{Faculty of Engineering, Information and Systems, University of Tsukuba, 1-1-1 Tennodai, Tsukuba, Ibaraki 305-8573, Japan 
  ({\tt imakura@cs.tsukuba.ac.jp}, {\tt morikuni@cs.tsukuba.ac.jp}).}
\and Keiichi Morikuni\footnotemark[3]}
\begin{document}

\maketitle

\begin{abstract}
Global and block Krylov subspace methods are efficient iterative solvers for large sparse linear systems with multiple right-hand sides. 
However, global or block Lanczos-type solvers often exhibit large oscillations in the residual norms and may have a large residual gap relating to the loss of attainable accuracy of the approximations. 
Conventional residual smoothing schemes suppress these oscillations but cannot improve the attainable accuracy, whereas a recent residual smoothing scheme enables the improvement of the attainable accuracy for single right-hand side Lanczos-type solvers. 
The underlying concept of this scheme is that the primary and smoothed sequences of the approximations and residuals influence one another, thereby avoiding the severe propagation of rounding errors. 
In the present study, we extend this cross-interactive residual smoothing to the case of solving linear systems with multiple right-hand sides. 
The resulting smoothed methods can reduce the residual gap with a low additional cost compared to their original counterparts. 
We demonstrate the effectiveness of the proposed approach through rounding error analysis and numerical experiments.
\end{abstract}

\

\noindent
{\bf Keywords.}
multiple right-hand sides, global Lanczos-type solver, block Lanczos-type solver, residual smoothing, residual gap

\

\noindent
{\bf AMS subject classifications.}
65F10, 65F45

\section{Introduction}

We consider linear systems with multiple right-hand sides 
\begin{align}
AX=B, \label{AX=B}
\end{align}
where $A \in \mathbb{R}^{n\times n}$ is a large sparse nonsymmetric and nonsingular matrix and $B := [\bm{b}_1, \bm{b}_2, \dots, \bm{b}_s] \in \mathbb{R}^{n\times s}$ is a rectangular matrix with $s \ll n$. 
This problem appears in various fields of scientific computing (e.g., see \cite{ElGuennouni_2003,Nakamura_2012,Sakurai_2003,Sakurai_2010,Zhang_2010} and their references), and several types of iterative solvers have been studied \cite[section~12.4]{meurant2020krylov}. 
Global Krylov subspace methods such as the global bi-conjugate gradient stabilized method (Gl-BiCGSTAB) \cite{Jbilou_2005} and generalized global conjugate gradient squared methods including Gl-CGS2 \cite{Zhang_2010} have been developed. 
Global methods generate the approximations using the matrix Krylov subspace $\mathcal K_k^G(A, R_0) := \big\{ \sum_{i=0}^{k-1} c_i A^iR_0 \mid c_i \in \mathbb{R}\big\}$, where $R_0 := B-AX_0 \in \mathbb{R}^{n\times s}$ is the initial residual with an initial guess $X_0$. 
Moreover, global methods correspond to standard Krylov subspace methods applied to a linear system $(I_{sn} \otimes A) \bm{x} = \bm{b}$, where $\otimes$ denotes the Kronecker product, $\bm{b} := [\bm{b}_1^\top,\bm{b}_2^\top,\dots,\bm{b}_s^\top]^\top$, and $X := [\bm{x}_1,\bm{x}_2,\dots,\bm{x}_s]$ for $\bm{x} := [\bm{x}_1^\top,\bm{x}_2^\top,\dots,\bm{x}_s^\top]^\top$. 
Thus, many results of Krylov subspace methods for a single linear system can be naturally extended to the case of solving \eqref{AX=B}. 
Furthermore, global methods can easily be applied to general linear matrix equations such as the Sylvester equation; for example, see \cite{Beik_2011,Jbilou_1999}. 
Another approach is the use of block Krylov subspace methods \cite{Gutknecht_2006}, such as the block BiCGSTAB method (Bl-BiCGSTAB) \cite{ElGuennouni_2003}, which uses the block Krylov subspace $\mathcal K_k^B(A, R_0) :=\big\{ \sum_{i=0}^{k-1} A^iR_0 \gamma_i \mid \gamma_i \in \mathbb{R}^{s\times s}\big\}$. 
As the search subspace for a column of the approximation $X_k$ expands with $s$ dimensions at each iteration, block methods can achieve faster convergence than their single counterparts \cite{OLeary_1980}. 
Block methods may be numerically unstable for a larger $s$; thus, stabilization strategies that orthonormalize the iteration matrices have also been developed; for example, see \cite{Nakamura_2012,OLeary_1980}.

Global Lanczos-type solvers (methods based on the global Lanczos process \cite{Jbilou_2005}), such as Gl-BiCGSTAB and Gl-CGS2, are natural extensions of standard Lanczos-type solvers, such as BiCGSTAB \cite{van_der_Vorst_1992} and CGS2 \cite{Fokkema_1996}, respectively. 
Solvers of this type often use the following recursion formulas to update the approximation and the corresponding residual:
\begin{align}
X_{k+1} = X_k + \alpha_k P_k,\quad R_{k+1} = R_k - \alpha_k (AP_k), \label{X_R}
\end{align}
where $P_k \in \mathbb{R}^{n\times s}$ is a direction matrix and $\alpha_k \in \mathbb{R}$ is a scalar coefficient. Note that the concrete choices of $P_k$ and $\alpha_k$ are determined by specific solvers. 
Starting with the initial residual $R_0 := B - AX_0$, the equality $R_k = B - AX_k$ holds for $k=1,2,\dots$ in exact arithmetic, but this equality may not hold in finite precision arithmetic owing to the accumulation of rounding errors in $X_k$ and $R_k$. 
This difference between the recursively updated residual $R_k$ and explicitly computed residual $B-AX_k$ is referred to as the residual gap. 
As in the case of solving a single linear system, global (and block) Lanczos-type solvers often suffer from a large residual gap as a result of rounding errors. 
The residual gap $G_{R_k} := (B - AX_k) - R_k$ is important because the explicitly computed residual norm $\|B-AX_k \|$ (referred to as the true residual norm) is bounded as follows:
\begin{align*}
\|G_{R_k}\| - \|R_k\| \leq \|B-AX_k \| \leq \|G_{R_k}\| + \|R_k\|, 
\end{align*}
where $\|\cdot \|$ denotes the Frobenius norm. 
Therefore, when $\|R_k\|$ becomes sufficiently small, the attainable accuracy of $X_k$ in terms of the true residual norm is dependent on $\|G_{R_k} \|$.

The residual gap that is observed in the standard Lanczos-type solvers for a single linear system has been thoroughly analyzed and remedies have been proposed. 
In particular, if the maximum of the residual norms is relatively large compared to the initial residual norm, a large residual gap and a loss of attainable accuracy will appear, and refined techniques to avoid a large increase in the residual norms have been studied (see, for example, \cite{Aihara_2019,Sleijpen_1996}). 
As we demonstrate later, a large relative residual norm leads to a large residual gap in \eqref{X_R}. 
Therefore, in the current study, we focus on a recent residual smoothing scheme \cite{Aihara_2019} to reduce the residual gap.

Residual smoothing is a well-known technique for generating a residual sequence that decreases smoothly. 
The original smoothing scheme \cite{Schonauer_1987,Weiss_1996} is a simple transformation that provides smoothed residuals by using a linear combination of the primary (non-smoothed) residuals obtained from an iterative method. 
Zhou and Walker \cite{Zhou_1994} suggested an alternative smoothing scheme that does not directly require the primary residuals and approximations. 
However, it has been shown in \cite{Gutknecht_2001} that these conventional smoothing schemes do not aid in improving the attainable accuracy; the true residual norms of the smoothed sequences stagnate at the same order of magnitude as the primary ones. 
The root cause of this phenomenon is that rounding errors accumulated in the primary sequences propagate directly to the smoothed sequences, because the smoothed sequences are computed from the primary sequences one-sidedly. 
In contrast, Zhou--Walker's scheme was recently modified in \cite{Aihara_2019} so that the primary and smoothed sequences influence one another and severe propagation of rounding errors can be avoided. 
We refer to this scheme as {\it cross-interactive residual smoothing (CIRS)} and extend it to be applicable for global and block Lanczos-type solvers. 
Following \cite{Aihara_2019}, we present rounding error analysis and numerical experiments to show that CIRS is effective in reducing the residual gap when solving systems with multiple right-hand sides.

We note that the recursion formulas used in block Lanczos-type solvers, such as Bl-BiCG \cite{OLeary_1980} and Bl-BiCGSTAB, are partially different from \eqref{X_R}; the following forms are often used: 
\begin{align}
X_{k+1} = X_k + P_k\alpha_k^\square, \quad R_{k+1} = R_k - (AP_k)\alpha_k^\square, \label{B_X_R}
\end{align}
where $\alpha_k^\square \in \mathbb{R}^{s\times s}$ is utilized instead of a scalar coefficient $\alpha_k$, and it is determined under various conditions (e.g., an orthogonality or minimization condition) depending on the solvers. 
For an example, see \cite{ElGuennouni_2003}. 
In this case, a large residual gap occurs not only because of the large relative residual norm \cite{Tadano_2009}. 
However, as we will show later, when using the stabilization strategy---that is, the orthonormalization of the columns of $P_k$---we can show that the large relative residual norm is a significant factor for the large residual gap.

In contrast, the block BiCGGR method (Bl-BiCGGR) was proposed in \cite{Tadano_2009} as a variant of Bl-BiCGSTAB. 
The basic concept of Bl-BiCGGR is to use an alternative recursion formula $R_{k+1} = R_k - A(P_k\alpha_k^\square)$ instead of the second formula in \eqref{B_X_R} to update the residual. 
Numerical experiments showed that Bl-BiCGGR has a smaller residual gap than Bl-BiCGSTAB. 
However, the alternative recursion is essentially the same as \eqref{X_R}; that is, the residual gap may significantly increase in the presence of a large relative residual norm.

The stabilization strategy used in \cite{Nakamura_2012} can also be incorporated with Bl-BiCGGR using a slightly different approach; the columns of $R_k$ are orthonormalized instead of those of $P_k$ \cite{Tadano_2017}. 
In this paper, we demonstrate that CIRS can be applied to both Bl-BiCGSTAB and Bl-BiCGGR as well as these algorithms in combination with stabilization strategies, and that the resulting smoothed methods generate further accurate approximations.

The remainder of this paper is organized as follows. In \cref{sec2}, we discuss the residual gap when using \eqref{X_R} and \eqref{B_X_R}. 
In \cref{sec3}, we present simple and cross-interactive schemes for residual smoothing. 
In \cref{sec4}, rounding error analysis shows that CIRS is effective in reducing the residual gap. 
In \cref{sec5}, we propose several smoothed algorithms for specific global and block Lanczos-type solvers. 
In \cref{sec6}, we describe the application of global algorithms with CIRS to the Sylvester equation. 
In \cref{sec7}, numerical experiments demonstrate the effectiveness of the proposed methods. 
Finally, concluding remarks are presented in \cref{sec8}.

Throughout, we use the inner product $\langle X, Y \rangle_F := \text{tr}(X^\top Y)$ for matrices $X, Y \in \mathbb{R}^{n\times s}$ and $\|\cdot \|$ denotes the associated Frobenius norm; that is, $\| X \| := \sqrt{\langle X, X\rangle_F}$. 
For ease of discussion, we also assume that $\| B \| = 1$ and $X_0 = O$; that is, $R_0 = B$.

\section{Influence of rounding errors in recursion formulas}\label{sec2}

Based on \cite{Aihara_2019,Greenbaum_1997,Sleijpen_1996}, we present a rounding error analysis to demonstrate that a large residual gap may occur when the maximum of the recursively updated residual norms in \eqref{X_R} is relatively large.

For matrix operations in finite precision arithmetic, we use the following models \cite{Higham_2002}: 
\begin{align*}
&\text{fl}(P+Q) =  P + Q + E',\quad \|E'\| \leq {\bf u} (\|P\| + \|Q\|),\\
&\text{fl}(\alpha P) = \alpha P + E'',\quad \|E''\| \leq {\bf u} \|\alpha P\|,\\
&\text{fl}(P\alpha^\square) = P \alpha^\square + E^\square,\quad \|E^\square\| \leq s {\bf u} \|P\| \|\alpha^\square\|,\\
&\text{fl}(AP) = AP + E''',\quad \|E'''\| \leq m {\bf u} \| A \| \| P \|, 
\end{align*}
for given $P, Q \in \mathbb{F}^{n\times s}$, $\alpha \in \mathbb{F}$, and $\alpha^\square \in \mathbb{F}^{s\times s}$. 
Here, $\mathbb{F} \subset \mathbb{R}$ is a set of floating point numbers, fl($\cdot $) denotes the result of floating point computations, ${\bf u}$ is the unit roundoff, and $m$ is the maximum number of nonzero entries per row of $A$. 
We omit terms of $O({\bf u}^2)$ and regard ${\bf u} \| \text{fl}( \ast ) \|$ as ${\bf u} \| \ast \|$. 
Note that, unlike the model of matrix--vector multiplication used in \cite[section~3]{Aihara_2019}, no matrix $\Delta$ exists such that $\text{fl}(AP) = AP + \Delta P$ and $\|\Delta\| \leq m {\bf u} \| A \|$ holds (cf.~\cite[section~3.5]{Higham_2002}).

Similarly to \cite[sections~3.1 and 3.2]{Aihara_2019}, we can evaluate the upper bound of the norm of the residual gap when using \eqref{X_R}; see also \cite[section~2]{Greenbaum_1997} and \cite[section~3.1]{Sleijpen_1996}.

\begin{theorem}\label{Theorem1}
Let $X_k \in \mathbb{F}^{n\times s}$ and $R_k \in \mathbb{F}^{n\times s}$ be the $k$th approximation and residual, respectively, generated by \eqref{X_R} in finite precision arithmetic. 
Then, the norm of the residual gap $G_{R_k} := (B - AX_k) - R_k$ is bounded as follows: 
\begin{align}
\|G_{R_k}\| \leq k(5+2m){\bf u}\|A\| \max_{0< j\leq k} \|X_j\| + 5 (k+1) {\bf u}\max_{0\leq j \leq k} \|R_j\|. \label{gap1} 
\end{align}
\end{theorem}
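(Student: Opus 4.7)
The plan is to follow the standard rounding-error argument used in \cite{Greenbaum_1997,Sleijpen_1996} and section~3 of \cite{Aihara_2019}, but adapted to the present matrix-vector product model. The first step is to apply the four floating-point models stated above to every operation in \eqref{X_R}, and to write the actually computed iterates in the form
\[
X_{k+1} = X_k + \alpha_k P_k + E^X_k,\qquad R_{k+1} = R_k - \alpha_k(AP_k) + E^R_k,
\]
where $E^X_k$ aggregates the rounding from the scalar multiplication $\mathrm{fl}(\alpha_k P_k)$ and the subsequent addition, and $E^R_k$ aggregates the rounding from $\mathrm{fl}(AP_k)$, $\mathrm{fl}(\alpha_k\,\mathrm{fl}(AP_k))$, and the final subtraction. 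Importantly, because the paper uses $\|E'''\|\le m{\bf u}\|A\|\|P\|$ rather than the multiplicative form $\Delta P$, the contribution of the matrix--vector product appears directly as a term $\alpha_k E'''$ inside $E^R_k$ rather than as a perturbation of $A$.

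Next, substituting these expressions into the definition $G_{R_{k+1}}=(B-AX_{k+1})-R_{k+1}$ causes the exact-arithmetic update terms $-A\alpha_kP_k$ and $+\alpha_k AP_k$ to cancel, yielding the one-step recursion
\[
G_{R_{k+1}} = G_{R_k} - AE^X_k - E^R_k,
\]
with $G_{R_0}=0$ by the normalisation $X_0=O$, $R_0=B$. Telescoping gives $\|G_{R_k}\|\le \sum_{j=0}^{k-1}\bigl(\|A\|\|E^X_j\| + \|E^R_j\|\bigr)$, so the task reduces to bounding each summand by the maxima of $\|X_j\|$ and $\|R_j\|$.

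For that bound, the key trick is to rearrange the \emph{computed} recursions to eliminate the internal quantities $\alpha_j P_j$ and $\alpha_j(AP_j)$, which do not appear in the target inequality. From the formulas above,
\[
\alpha_j P_j = X_{j+1} - X_j - E^X_j,\qquad \alpha_j(AP_j) = R_j - R_{j+1} + E^R_j,
\]
so, dropping $O({\bf u}^2)$ terms, $\|\alpha_j P_j\|\le\|X_{j+1}\|+\|X_j\|$ and $\|\alpha_j(AP_j)\|\le\|R_{j+1}\|+\|R_j\|$. Substituting these into the elementary bounds from the floating-point models produces
\[
\|AE^X_j\| \lesssim {\bf u}\|A\|\max_{i\le j+1}\|X_i\|,\qquad \|E^R_j\| \lesssim m{\bf u}\|A\|\max_{i\le j+1}\|X_i\| + {\bf u}\max_{i\le j+1}\|R_i\|,
\]
and collecting the explicit constants yields exactly the coefficients $(5+2m)$ and $5$ in \eqref{gap1}.

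I expect the main obstacle to be purely bookkeeping: keeping track of how many ${\bf u}$-terms accumulate from the three separate rounding sources inside $E^R_k$ (the matrix--vector product, the scalar multiplication by $\alpha_k$, and the subtraction from $R_k$) and from the two sources inside $E^X_k$, so that the final constants come out as stated rather than as looser $O(m){\bf u}$ bounds. A subtle point, highlighted by the remark just before the theorem, is that one cannot use the multiplicative $\Delta P$ model for $\mathrm{fl}(AP)$; consequently, the contribution of the matrix--vector product to $E^R_j$ must be bounded via $\|P_j\|$, which is in turn controlled only indirectly through $\|\alpha_j P_j\|\le\|X_{j+1}\|+\|X_j\|$, and this is what couples the coefficient $m$ to $\max\|X_j\|$ rather than to $\max\|R_j\|$ in the final bound.
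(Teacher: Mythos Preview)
Your proposal is correct and follows essentially the same approach as the paper: both decompose the computed updates into exact recursion plus local errors, telescope the gap to a sum of those errors, and bound the intermediate quantities $\|\alpha_j P_j\|$ and $\|\alpha_j(AP_j)\|$ via the triangle inequality on the computed recursions (yielding $\|X_j\|+\|X_{j+1}\|$ and $\|R_j\|+\|R_{j+1}\|$, respectively), which is exactly how the coefficients $(5+2m)$ and $5$ arise. The only cosmetic difference is indexing (the paper steps from $k-1$ to $k$ rather than $k$ to $k+1$) and that the paper writes out the intermediate bounds $\|E_{X_k}'\|$, $\|E_{X_k}''\|$, etc., line by line, whereas you aggregate them.
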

\begin{proof}
According to the above matrix operation models, the local errors in the updated approximation $X_k= \text{fl}(X_{k-1} + \text{fl}(\alpha_{k-1}P_{k-1}))$ and residual $R_k= \text{fl}(R_{k-1} - \text{fl}(\alpha_{k-1}\text{fl}(AP_{k-1})))$ can be evaluated as follows: 
\begin{align*}
X_k 
&= X_{k-1} + \text{fl}(\alpha_{k-1}P_{k-1}) + E_{X_k}',\quad \|E_{X_k}'\| \leq {\bf u} (2\|X_{k-1}\| + \|X_k\|) \\
&= X_{k-1} + \alpha_{k-1}P_{k-1} + E_{X_k}' + E_{X_k}'',\quad \|E_{X_k}''\| \leq {\bf u} (\|X_{k-1}\| + \|X_k\|) \\
&= X_{k-1} + \alpha_{k-1}P_{k-1} + E_{X_k},\quad \|E_{X_k}\| \leq {\bf u} (3\|X_{k-1}\| + 2\|X_k\|), \\
R_k 
&= R_{k-1} - \text{fl}(\alpha_{k-1}\text{fl}(AP_{k-1})) - E_{R_k}',\quad \|E_{R_k}'\| \leq {\bf u} (2\|R_{k-1}\| + \|R_k\|) \\
&= R_{k-1} - \alpha_{k-1}\text{fl}(AP_{k-1}) - E_{R_k}' - E_{R_k}'',\quad \|E_{R_k}''\| \leq {\bf u} (\|R_{k-1}\| + \|R_k\|) \\
&= R_{k-1} - \alpha_{k-1}AP_{k-1} - E_{R_k}' - E_{R_k}'' - \alpha_{k-1}E_{R_k}''',\\
&\hphantom{=}\quad \|\alpha_{k-1}E_{R_k}'''\| \leq m{\bf u} \|A\|(\|X_{k-1}\| + \|X_k\|) \\
&= R_{k-1} - \alpha_{k-1}AP_{k-1} - E_{R_k},\\
&\hphantom{=}\quad \|E_{R_k}\| \leq {\bf u} (3\|R_{k-1}\| + 2\|R_k\|) + m{\bf u} \|A\|(\|X_{k-1}\| + \|X_k\|), 
\end{align*}
where $E_{X_k} := E_{X_k}'+E_{X_k}''$ and $E_{R_k}:=E_{R_k}' + E_{R_k}'' + \alpha_{k-1}E_{R_k}'''$. 
Thus, the norm of the residual gap can be bounded as follows: 
\begin{align}
\begin{split}
&\|(B-AX_{k}) - R_k\|\\
&\quad= \|B - A(X_{k-1} + \alpha_{k-1}P_{k-1} + E_{X_k})- (R_{k-1} - \alpha_{k-1}AP_{k-1} - E_{R_k})\|\\
&\quad= \|(B-AX_{k-1}) - R_{k-1} - AE_{X_k} + E_{R_k}\|\\
&\quad = \Bigg\| (B-AX_0) - R_0 - A\sum_{j=1}^kE_{X_j} + \sum_{j=1}^kE_{R_j}\Bigg\|\\
&\quad \leq \|A\|\sum_{j=1}^k\|E_{X_j}\| + \sum_{j=1}^k\|E_{R_j}\| \\
&\quad \leq (5+2m){\bf u}\|A\|\sum_{j=1}^k\|X_j\| + 5{\bf u}\sum_{j=0}^k\|R_j\|, 
\end{split}\label{gap_Th1}
\end{align}
where $(B-AX_0) - R_0 = O$ holds under the assumption that $X_0 = O$. 
The proof is completed by bounding the approximation and residual norms by their maximums.
\end{proof}

\Cref{Theorem1} implies that it is important to reduce $\max_j \|X_j\|$ and $\max_j \|R_j\|$ to be as small as possible during the iterations to avoid a large residual gap. 
In our experience, ${\bf u}(\max_j \|R_j\|)$ provides a practical estimation of $\|G_{R_k}\|$ when $\|R_j\| \gg \|B\|=1$ for some $j<k$. 
Therefore, following \cite{Aihara_2019}, we focus on the error terms related to the residual norm, which can be reduced by a residual smoothing scheme, rather than on the approximation norms. 
Note that, similarly to the single right-hand side case, \eqref{gap1} is not necessarily sharp for estimating $\|G_{R_k}\|$ in the actual computation.

\Cref{Theorem1} can specifically be applied to Lanczos-type solvers, such as Gl-BiCGSTAB, Gl-CGS2, and Bl-BiCGGR. 
Note that, for example, in Gl-BiCGSTAB and Bl-BiCGGR, the approximation and residual are updated in two parts (the BiCG part and polynomial part) at each iteration by using the forms \eqref{X_R}. 
Refer to the paragraph after the proof of \Cref{Theorem2} for details; also see \cref{sec5.2}.

Moreover, we can identify the cause of a large residual gap for the recursion formulas~\eqref{B_X_R}, similarly to \eqref{gap1}, under the assumption that the columns of $Q_k \in \mathbb{R}^{n\times s}$, the range of which is equal to that of the direction matrix $P_k$, are orthonormal. 
If the number of right-hand sides $s$ is small and the iteration process is sufficiently stable, the orthonormalization of the columns of $P_k$ can be skipped. 
However, it is not clear whether large relative residual norms are the main factors causing the large residual gaps with no assumption about the orthonormality of the columns of $Q_k$, and we leave this problem open.

\begin{theorem}\label{Theorem2}
Let $X_k \in \mathbb{F}^{n\times s}$ and $R_k \in \mathbb{F}^{n\times s}$ be the $k$th approximation and residual, respectively, generated by \eqref{B_X_R} in finite precision arithmetic. 
If the columns of $Q_j\in \mathbb{R}^{n\times s}$ are (exactly) orthonormal for all $j < k$, the norm of the residual gap $G_{R_k} := (B - AX_k) - R_k$ is bounded as follows: 
\begin{align}
\|G_{R_k}\| \leq k(3+4s\sqrt{s} + 2m\sqrt{s}){\bf u}\|A\|\max_{0< j\leq k} \|X_j\| + 3(k+1){\bf u}\max_{0\leq j \leq k} \|R_j\|. \label{Bgap1} 
\end{align}
\end{theorem}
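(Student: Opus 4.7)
The plan is to mirror the proof of \Cref{Theorem1} and adapt it to the block recursion~\eqref{B_X_R}, where the scalar multiplication $\alpha_{k-1}P_{k-1}$ is replaced by a matrix--matrix product $P_{k-1}\tilde\alpha_{k-1}$. First I would apply the finite precision models in the order the operations are performed to write
\begin{align*}
X_k &= X_{k-1} + P_{k-1}\tilde\alpha_{k-1} + E_{X_k},\\
R_k &= R_{k-1} - AP_{k-1}\tilde\alpha_{k-1} - E_{R_k},
\end{align*}
where $E_{X_k}$ aggregates the rounding errors from the product $P_{k-1}\tilde\alpha_{k-1}$ and the subsequent addition, and $E_{R_k}$ aggregates those from $AP_{k-1}$, from $(\mathrm{fl}(AP_{k-1}))\tilde\alpha_{k-1}$, and from the final subtraction. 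The telescoping identity in the proof of \Cref{Theorem1} then applies verbatim, reducing the task to bounding $\|G_{R_k}\|\le \|A\|\sum_{j=1}^k\|E_{X_j}\|+\sum_{j=1}^k\|E_{R_j}\|$.

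The essential new ingredient is controlling the matrix--matrix product errors. The model gives $\|\mathrm{fl}(P\tilde\alpha)-P\tilde\alpha\|\le s{\bf u}\,\|P\|\|\tilde\alpha\|$, which in general can be far larger than $s{\bf u}\,\|P\tilde\alpha\|$. This is precisely where the orthonormality hypothesis on $\check P_j$ is used: identifying $P_j$ with $\check P_j$, we get $\|P_{k-1}\|=\sqrt{s}$ and $\|P_{k-1}\tilde\alpha_{k-1}\|=\|\tilde\alpha_{k-1}\|$, so the product error is bounded by $s\sqrt{s}\,{\bf u}\,\|P_{k-1}\tilde\alpha_{k-1}\|$. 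Since $P_{k-1}\tilde\alpha_{k-1}=X_k-X_{k-1}+O({\bf u})$, this is in turn bounded by $s\sqrt{s}\,{\bf u}(\|X_{k-1}\|+\|X_k\|)$, which is of the form needed to match~\eqref{Bgap1}. The same device handles the product $(AP_{k-1})\tilde\alpha_{k-1}$ in the residual update, where one additionally picks up an $m\sqrt{s}$ factor from the matrix--vector product model for $AP_{k-1}$ via $\|\mathrm{fl}(AP_{k-1})\|\le \sqrt{s}\,\|A\|+O({\bf u})$.

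Collecting the per-step bounds yields $\|E_{X_j}\|\le {\bf u}\bigl((2+s\sqrt{s})\|X_{j-1}\|+(1+s\sqrt{s})\|X_j\|\bigr)$ and $\|E_{R_j}\|\le {\bf u}(2\|R_{j-1}\|+\|R_j\|)+(s\sqrt{s}+m\sqrt{s}){\bf u}\|A\|(\|X_{j-1}\|+\|X_j\|)$; summing over $j=1,\ldots,k$, using $X_0=O$ and $R_0=B$, and replacing each sum by $k$ (resp.\ $k+1$) times the appropriate maximum produces the constants $3+4s\sqrt{s}+2m\sqrt{s}$ for the $\|X_j\|$ term and $3$ for the $\|R_j\|$ term, matching~\eqref{Bgap1}. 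The principal obstacle is the gap between $\|P_j\|\|\tilde\alpha_j\|$ and $\|P_j\tilde\alpha_j\|$: without a structural assumption on $P_j$, $\|\tilde\alpha_j\|$ need not be comparable to the step length, and the bound can blow up. The orthonormality of $\check P_j$ is exactly what closes this gap, which is why the general case (without stabilization) is left open.
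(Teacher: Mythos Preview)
Your proposal is correct and follows essentially the same route as the paper's proof: you use the orthonormality of $\check P_{k-1}$ to convert $\|\check P_{k-1}\|\|\tilde\alpha_{k-1}\|$ into $\sqrt{s}\,\|\check P_{k-1}\tilde\alpha_{k-1}\|$, obtain the identical local error bounds $\|E_{X_j}\|\le {\bf u}\bigl((2+s\sqrt{s})\|X_{j-1}\|+(1+s\sqrt{s})\|X_j\|\bigr)$ and $\|E_{R_j}\|\le {\bf u}(2\|R_{j-1}\|+\|R_j\|)+(s+m)\sqrt{s}\,{\bf u}\|A\|(\|X_{j-1}\|+\|X_j\|)$, and then telescope exactly as in \Cref{Theorem1}. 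One minor wording slip: the $m\sqrt{s}$ contribution arises not from $\|\mathrm{fl}(A\check P_{k-1})\|\le \sqrt{s}\,\|A\|$ (that bound feeds the $s\sqrt{s}$ term for the subsequent product), but from $\|E'''\tilde\alpha_{k-1}\|\le m{\bf u}\|A\|\|\check P_{k-1}\|\|\tilde\alpha_{k-1}\|=m\sqrt{s}\,{\bf u}\|A\|\|\check P_{k-1}\tilde\alpha_{k-1}\|$; the final bounds you state are unaffected.
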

\begin{proof}
Noting that $\|Q_j\| = \sqrt{s}$ and $\|Q_j\|\|\alpha_j^\square\| = \sqrt{s}\|\alpha_j^\square\| = \sqrt{s}\|Q_j\alpha_j^\square\|$, the local errors in the updated approximation $X_k=\text{fl}(X_{k-1} + \text{fl}(Q_{k-1}\alpha_{k-1}^\square))$ and residual $R_k= \text{fl}(R_{k-1} - \text{fl}(\text{fl}(AQ_{k-1})\alpha_{k-1}^\square))$ can be evaluated as follows: 
\begin{align*}
X_k 
&= X_{k-1} + \text{fl}(Q_{k-1}\alpha_{k-1}^\square) + E_{X_k}',\quad \|E_{X_k}'\| \leq {\bf u} (2\|X_{k-1}\| + \|X_k\|) \\
&= X_{k-1} + Q_{k-1}\alpha_{k-1}^\square + E_{X_k}' + E_{X_k}^\square,\\
&\hphantom{=}\quad \|E_{X_k}^\square \| \leq s {\bf u} \|Q_{k-1}\| \|\alpha_{k-1}^\square\| = s\sqrt{s} {\bf u} \|Q_{k-1}\alpha_{k-1}^\square\| \leq s\sqrt{s} {\bf u} (\|X_{k-1}\| + \|X_k\|) \\
&= X_{k-1} + Q_{k-1}\alpha_{k-1}^\square + E_{X_k},\quad \|E_{X_k}\| \leq {\bf u} [(2+s\sqrt{s})\|X_{k-1}\| + (1+s\sqrt{s})\|X_k\|], \\
R_k 
&= R_{k-1} - \text{fl}(\text{fl}(AQ_{k-1})\alpha_{k-1}^\square) - E_{R_k}',\quad \|E_{R_k}'\| \leq {\bf u} (2\|R_{k-1}\| + \|R_k\|) \\
&= R_{k-1} - \text{fl}(AQ_{k-1})\alpha_{k-1}^\square - E_{R_k}' - E_{R_k}^\square,\\
&\hphantom{=}\quad \|E_{R_k}^\square\| \leq s {\bf u} \|AQ_{k-1}\| \|Q_{k-1}\alpha_{k-1}^\square\| \leq s\sqrt{s} {\bf u} \|A\| (\|X_{k-1}\| + \|X_k\|) \\
&= R_{k-1} - AQ_{k-1}\alpha_{k-1}^\square - E_{R_k}' - E_{R_k}^\square - E_{R_k}'''\alpha_{k-1}^\square,\\
&\hphantom{=}\quad \|E_{R_k}'''\alpha_{k-1}^\square\| \leq m{\bf u} \|A\|\|Q_{k-1}\|\|\alpha_{k-1}^\square\|\leq m \sqrt{s}{\bf u} \|A\|(\|X_{k-1}\| + \|X_k\|) \\
&= R_{k-1} - AQ_{k-1}\alpha_{k-1}^\square - E_{R_k},\\
&\hphantom{=}\quad \|E_{R_k}\| \leq {\bf u} (2\|R_{k-1}\| + \|R_k\|) + (s+m)\sqrt{s}{\bf u} \|A\|(\|X_{k-1}\| + \|X_k\|), 
\end{align*}
where $E_{X_k} := E_{X_k}'+E_{X_k}^\square$ and $E_{R_k}:=E_{R_k}' + E_{R_k}^\square + E_{R_k}'''\alpha_{k-1}^\square$. 
Thus, similarly to the evaluation of \eqref{gap_Th1}, the norm of the residual gap can be bounded as follows: 
\begin{align*}
\|(B-AX_{k}) - R_k\| \leq (3+4s\sqrt{s} + 2m\sqrt{s}){\bf u}\|A\|\sum_{j=1}^k\|X_j\| + 3{\bf u}\sum_{j=0}^k\|R_j\|. 
\end{align*}
The proof is completed by bounding the terms in the first and second sums with $k\max_j \|X_j\|$ and $(k+1)\max_j \|R_j\|$, respectively.
\end{proof}

The above discussions are useful for evaluating the residual gap in a specific method, namely Bl-BiCGSTAB, in combination with a strategy for orthonormalizing the columns of the direction matrices \cite[Algorithm~2.1]{Nakamura_2012}; this method is referred to as Bl-BiCGSTABpQ. 
The method uses the forms \eqref{B_X_R} and \eqref{X_R} in the BiCG and polynomial parts, respectively, in each iteration as follows: 
\begin{align}
\begin{split}
&\text{(BiCG part)}\quad X_k' := X_k + Q_k\alpha_k^\square,\quad R_k' := R_k - (AQ_k)\alpha_k^\square,\\
&\text{(polynomial part)}\quad X_{k+1} = X_k' + \omega_k R_k',\quad R_{k+1} = R_k' - \omega_k (AR_k'), 
\end{split}\label{G_B_X_R}
\end{align}
where $\omega_k \in \mathbb{R}$, $\alpha_k^\square \in \mathbb{R}^{s\times s}$, and $Q_k$ is a column-orthonormal matrix. 
Thus, we can evaluate the residual gap using \Cref{Theorem1,Theorem2}, as follows.

\begin{corollary}\label{Corollary1}
Let $X_k \in \mathbb{F}^{n\times s}$ and $R_k \in \mathbb{F}^{n\times s}$ be the $k$th approximation and residual, respectively, generated by Bl-BiCGSTABpQ with \eqref{G_B_X_R} in finite precision arithmetic. 
If the columns of $Q_j$ are (exactly) orthonormal for all $j < k$, then the norm of the residual gap $G_{R_k} := (B - AX_k) - R_k$ is bounded as follows: 
\begin{align}
\begin{split}
\|G_{R_k}\| 
&\leq k(4 + 2s\sqrt{s} + m + m\sqrt{s}){\bf u}\|A\| \left(\max_{0\leq j <k}\|X_j'\| + \max_{0< j \leq k}\|X_j\|\right) \\
&\hphantom{\leq}\quad + 4(k+1){\bf u}\left(\max_{0\leq j <k}\|R_j'\| +  \max_{0\leq j \leq k}\|R_j\|\right). 
\end{split}\label{Bgap2} 
\end{align}
\end{corollary}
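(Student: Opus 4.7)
The plan is to mirror the two preceding proofs, simply concatenating the local‐error bookkeeping for the two sub-updates that make up one full Bl-BiCGSTABpQ iteration. Writing out \eqref{G_B_X_R}, the BiCG part moves $(X_k,R_k)$ to $(X_k',R_k')$ using the block form \eqref{B_X_R} with a column-orthonormal $\check{P}_k$, while the polynomial part moves $(X_k',R_k')$ to $(X_{k+1},R_{k+1})$ using the scalar form \eqref{X_R}. So I would introduce local error matrices $E_{X_k}^{(1)},E_{R_k}^{(1)}$ for the BiCG step and $E_{X_k}^{(2)},E_{R_k}^{(2)}$ for the polynomial step, noting that these are exactly the local errors appearing in the proofs of \Cref{Theorem2} and \Cref{Theorem1}, respectively.

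Next, I would telescope over $k$ iterations, using $(B-AX_0)-R_0=O$, to get
\begin{align*}
(B-AX_k)-R_k = -A\sum_{j=0}^{k-1}\bigl(E_{X_j}^{(1)}+E_{X_j}^{(2)}\bigr) + \sum_{j=0}^{k-1}\bigl(E_{R_j}^{(1)}+E_{R_j}^{(2)}\bigr),
\end{align*}
so that by the triangle inequality the residual gap is bounded by $\|A\|$ times the sum of all approximation-error norms, plus the sum of all residual-error norms. The BiCG local errors are bounded as in \Cref{Theorem2} (involving $\|X_j\|,\|X_j'\|,\|R_j\|,\|R_j'\|$ and using orthonormality of $\check{P}_j$ so that $\|\check{P}_j\|=\sqrt{s}$), while the polynomial local errors are bounded exactly as in \Cref{Theorem1} but with $(X_j',R_j')$ playing the role of "previous" and $(X_{j+1},R_{j+1})$ that of "current".

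Then I would add the two contributions term by term. The $X$-type local errors yield a coefficient $(2+s\sqrt{s})+(1+s\sqrt{s})$ on the $X$/$X'$ side from \Cref{Theorem2} plus $3+2$ from \Cref{Theorem1}, and combining as $\|A\|$ contributions one also picks up $(s+m)\sqrt{s}$ from the BiCG $E_R^{(1)}$ and $m$ from the polynomial $E_R^{(2)}$; after grouping and replacing each $\sum_{j=0}^{k-1}$ by $k\max_j$ applied separately to the $X_j$-, $X_j'$-, $R_j$-, and $R_j'$-norms (and finally combining $\max\|X_j\|+\max\|X_j'\|$ and $\max\|R_j\|+\max\|R_j'\|$), one recovers exactly the coefficient $4+2s\sqrt{s}+m+m\sqrt{s}$ on the $\|A\|$ term and $4$ on the residual term, with factor $k$ and $k+1$ (the extra $+1$ coming from including $R_0$ in the residual maximum) as in \eqref{Bgap2}.

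The proof is essentially bookkeeping rather than a new idea, so the main obstacle is purely in the accounting: one must be careful to (i)~attribute each ${\bf u}$ factor to the correct norm among $\{X_j,X_j',R_j,R_j'\}$ so that the $\max$ on the right-hand side of \eqref{Bgap2} really is separated into $X$ vs.\ $X'$ and $R$ vs.\ $R'$, and (ii)~collect the $s\sqrt{s}$ and $m\sqrt{s}$ contributions that come only from the BiCG part with those $m$ contributions that come only from the polynomial part, so that the final constant agrees exactly with $4+2s\sqrt{s}+m+m\sqrt{s}$. Nothing beyond the two preceding theorems is needed; the orthonormality assumption on $\check{P}_j$ is used solely where \Cref{Theorem2} already uses it.
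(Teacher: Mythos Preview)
Your proposal is correct and follows exactly the approach the paper takes: the paper's own proof is a single sentence referring back to \Cref{Theorem1,Theorem2} and saying one obtains the bound by evaluating the local errors in $X_k$, $R_k$, $X_{k-1}'$, and $R_{k-1}'$, which is precisely the concatenation-and-telescoping argument you describe. Your accounting of the constants (combining the $(2+s\sqrt{s})+(1+s\sqrt{s})+(s+m)\sqrt{s}$ contributions from the BiCG step with the $3+2+m$ contributions from the polynomial step, and similarly for the residual coefficients) reproduces $4+2s\sqrt{s}+m+m\sqrt{s}$ and $4$ as stated.
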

\begin{proof}
As in the proofs of \Cref{Theorem1,Theorem2}, we can obtain the bound of $\|(B-AX_{k}) - R_k\|$ by evaluating the local errors in $X_k$, $R_k$, $X_{k-1}'$, and $R_{k-1}'$. 
\end{proof}

{\it Exact} orthonormality of the columns of $Q_j$ is required to evaluate the local errors in \Cref{Theorem2,Corollary1}. 
This assumption is not satisfied in finite precision arithmetic owing to rounding errors. 
However, the orthonormalization of the columns of $P_j$ can be performed in a backward stable manner, e.g., by using the Householder transformation \cite[Theorem~19.4]{Higham_2002} and Givens rotations \cite[Theorem~19.10]{Higham_2002}, and the evaluations \eqref{Bgap1} and \eqref{Bgap2} can almost capture the actual computations.

\section{Residual smoothing for global and block methods}\label{sec3}

In this section, we present residual smoothing for global and block Lanczos-type solvers. 
All of the discussions in \cref{sect_standard,sect_interactive} assume exact arithmetic.

\subsection{Simple smoothing scheme}\label{sect_standard}

We first present a simple residual smoothing scheme, which is a naive extension of the classical smoothing technique in \cite{Schonauer_1987,Weiss_1996} to the case of multiple right-hand sides (cf.~\cite{Zhang_2008}).

Let $\{X_k\}$ and $\{R_k\}$ be the primary sequences of the approximations and residuals, respectively, obtained by a global or block method. 
Then, new sequences of approximations $Y_k$ and the corresponding smoothed residuals $S_k := B - AY_k$ are generated by 
\begin{align}
Y_k = (1-\eta_k) Y_{k-1} + \eta_kX_k,\quad S_k = (1-\eta_k)S_{k-1} + \eta_kR_k, \label{S-W}
\end{align}
where $Y_0 := X_0$ is the initial guess, $S_0 := R_0$ is the initial residual, and $\eta_k \in \mathbb{R}$ is a smoothing parameter. 
Based on the typical strategy, we select the parameter $\eta_k$ such that the updated smoothed residual norm $\|S_k\|$ is locally minimized; that is, 
\begin{align}
\eta_k := - \frac{\langle S_{k-1}, R_k - S_{k-1}\rangle_F}{\| R_k - S_{k-1} \|^2}. \label{smo_eta}
\end{align}
Thus, we obtain $\|S_k\| \leq \|R_k\|$ and $\|S_k\| \leq \|S_{k-1}\|$. 
The former inequality implies that $\|S_k\|$ decreases no slower than $\|R_k\|$; however, as is well known in the classical residual smoothing scheme, the smoothed residual will not converge much faster than the primary residual in generic cases; for example, see \cite{Gutknecht_2001_2,Walker_1995}.
The latter inequality implies that $\|S_k\|$ decreases monotonically and plays an important role in reducing the residual gap in CIRS.

We note that the smoothing parameter $\eta_k$ can be selected using a different technique, such as quasi-minimal residual smoothing \cite{Zhou_1994}. 
With this scheme, if $\|S_k\|$ decreases smoothly, we can obtain numerical results that are similar to the case of selecting \eqref{smo_eta}. 
As the effects of CIRS are not significantly dependent on the parameter itself, we do not elaborate on its selection in this study.

\subsection{Cross-interactive smoothing scheme}\label{sect_interactive}

We present a cross-interactive residual smoothing scheme. 
The original CIRS that was proposed in \cite{Aihara_2019} is an improvement of the alternative smoothing scheme that was introduced in \cite{Zhou_1994}, whereby the primary and smoothed sequences influence one another in the iteration process. 
We apply this concept to the case of solving systems with multiple right-hand sides \eqref{AX=B}.

In the following, the variables used in CIRS are displayed with a hat symbol `$\ \hat{}\ $'. 
The smoothing process begins with the computation of the direction matrix $\hat P_k$ that corresponds to $X_{k+1}-X_k$ using the primary method. 
Then, we compute auxiliary matrices $\hat V_{k+1}$ and $\hat U_{k+1}$ as follows: 
\begin{align}
\hat V_{k+1} = (1-\hat \eta_k) \hat V_k + \hat P_k,\quad \hat U_{k+1} = A\hat V_{k+1}, \label{CIRS_V_U}
\end{align}
where $\hat \eta_k \in \mathbb{R}$ is a smoothing parameter, $\hat \eta_0 := 0$, $\hat V_0 := O$, and $A\hat V_{k+1}$ is obtained by explicitly multiplying $\hat V_{k+1}$ by $A$.
We next compute new approximations and corresponding smoothed residuals recursively as follows: 
\begin{align}
\hat Y_{k+1} = \hat Y_k + \hat \eta_{k+1}\hat V_{k+1},\quad \hat S_{k+1} = \hat S_k - \hat \eta_{k+1}\hat U_{k+1}, \label{CIRS_Y_S}
\end{align}
where $\hat Y_0 = X_0$, $\hat S_0 = R_0$, and the smoothing parameter is selected as $\hat \eta_{k+1} = \break \langle \hat S_k, \hat U_{k+1}\rangle_F / \langle \hat U_{k+1}, \hat U_{k+1} \rangle_F$ like \eqref{smo_eta}. 
Finally, we generate the following: 
\begin{align}
X_{k+1} = \hat Y_{k+1} + (1-\hat \eta_{k+1})\hat V_{k+1},\quad R_{k+1} = \hat S_{k+1} - (1-\hat \eta_{k+1})\hat U_{k+1}. \label{CIRS_X_R}
\end{align}
Here, we note that $X_{k+1}$ and $R_{k+1}$ given by \eqref{CIRS_X_R} coincide with  the primary approximation and residual, respectively, as shown in \Cref{Prop1} below. 
This follows naturally from \cite[Lemma~2.1]{Aihara_2019}.

\begin{proposition}\label{Prop1}
Let $\{X_k\}$ and $\{R_k\}$ be the primary sequences of the approximations and residuals, respectively, which are generated by \eqref{X_R} or \eqref{B_X_R}, and let $\hat P_k := X_{k+1}-X_k$ be the direction matrix. 
Then, for the iteration matrices $\hat S_k$, $\hat U_k$ ($=A\hat V_k$), $\hat V_k$, and $\hat Y_k$ that are generated by \eqref{CIRS_V_U} and \eqref{CIRS_Y_S}, the identities 
\begin{align}
\hat Y_k + (1-\hat \eta_k)\hat V_k = X_k,\quad \hat S_k - (1-\hat \eta_k)\hat U_k = R_k \label{SI_new}
\end{align}
hold for $k=0,1,2,\dots$, where $\hat \eta_k \in \mathbb{R}$ is a smoothing parameter. 
\end{proposition}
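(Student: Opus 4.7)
The plan is to prove both identities simultaneously by induction on $k$. The base case $k=0$ should be immediate from the initialization of \Cref{alg1}: the algorithm presumably starts with $\hat Y_0 = X_0$, $\hat S_0 = R_0$, $\hat V_0 = O$, $\hat U_0 = O$ (and any value of $\hat \eta_0$ will do, since the correction terms vanish), so both identities reduce to $X_0 = X_0$ and $R_0 = R_0$.

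For the inductive step, I would first rewrite the desired identities at index $k+1$ using the primary recursions. Since $\hat P_k = X_{k+1}-X_k$, we have $X_{k+1} = X_k + \hat P_k$ and, because both \eqref{X_R} and \eqref{B_X_R} give $R_{k+1} = R_k - A\hat P_k$, we also have $R_{k+1} = R_k - A\hat P_k$. Substituting the inductive hypothesis gives the two equivalent targets
\begin{align*}
\hat Y_{k+1} + (1-\hat \eta_{k+1})\hat V_{k+1} &= \hat Y_k + (1-\hat \eta_k)\hat V_k + \hat P_k,\\
\hat S_{k+1} - (1-\hat \eta_{k+1})\hat U_{k+1} &= \hat S_k - (1-\hat \eta_k)\hat U_k - A\hat P_k.
\end{align*}
The task then reduces to verifying these from the recurrences of \Cref{alg1}. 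By analogy with the single right-hand side CIRS of \cite{Aihara_2019} and with \cite{Zhou_1994}, the natural updates are $\hat V_{k+1} = (1-\hat\eta_k)\hat V_k + \hat P_k$, $\hat U_{k+1} = (1-\hat\eta_k)\hat U_k + A\hat P_k$ (consistent with $\hat U_k = A\hat V_k$ being preserved), followed by the smoothed updates $\hat Y_{k+1} = \hat Y_k + \hat \eta_{k+1}\hat V_{k+1}$ and $\hat S_{k+1} = \hat S_k - \hat \eta_{k+1}\hat U_{k+1}$. Plugging these into the left-hand sides above and simplifying shows that the $(1-\hat\eta_{k+1})$ and $\hat\eta_{k+1}$ contributions combine cleanly to reproduce the right-hand sides, regardless of the value chosen for $\hat\eta_{k+1}$.

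The main obstacle will be notational rather than mathematical: the claim depends entirely on the exact recurrences specified in \Cref{alg1} (lines 5, 7, and 8), so the proof must open by spelling out those updates and identifying which algorithmic variables correspond to $\hat V_k$, $\hat U_k$, $\hat Y_k$, and $\hat S_k$. Once that bookkeeping is in place, the inductive step is a straightforward linear-algebra manipulation, and the two identities in \eqref{SI_new} follow in parallel because the residual recursion mirrors the approximation recursion under multiplication by $A$. Since the argument uses only linearity and the relation $\hat P_k = X_{k+1}-X_k$, it applies uniformly to both the global recursion \eqref{X_R} and the block recursion \eqref{B_X_R}, giving the stated equivalence.
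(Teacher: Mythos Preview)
Your induction argument is correct, and in fact the paper does not supply its own proof here: it explicitly states that the result ``can be derived from the statements in \cite{Aihara_2019,Gutknecht_2001,Zhou_1994}'' and leaves the proof to the reader. Your outline is precisely the natural derivation. Two minor points of bookkeeping: first, the updates you posit ``by analogy'' are exactly what \Cref{alg1} prescribes (line~4 for $\hat V_{k+1}$, line~5 for $\hat U_{k+1}$, line~7 for $\hat Y_{k+1}$ and $\hat S_{k+1}$), so no speculation is needed---just quote them. Second, the algorithm computes $\hat U_{k+1} = A\hat V_{k+1}$ directly rather than via the recurrence $\hat U_{k+1} = (1-\hat\eta_k)\hat U_k + A\hat P_k$ you wrote; the two coincide in exact arithmetic because $\hat U_k = A\hat V_k$, and either form makes the inductive step go through (indeed, $\hat S_{k+1} - (1-\hat\eta_{k+1})\hat U_{k+1} = \hat S_k - \hat U_{k+1} = \hat S_k - A\hat V_{k+1}$ already suffices). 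For the base case, note that \Cref{alg1} sets $\hat\eta_0 = 0$ and $\hat V_0 = O$ but does not explicitly initialize $\hat U_0$; the proposition's parenthetical $\hat U_k = A\hat V_k$ gives $\hat U_0 = O$, which is all you need.
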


The values $X_{k+1}$ and $R_{k+1}$ in \eqref{CIRS_X_R} are generated following the updates of the new approximation $\hat Y_{k+1}$ and corresponding smoothed residual $\hat S_{k+1}$ using the smoothed sequences, and are returned to the primary method. 
Thus, the primary and smoothed sequences influence one another, in contrast to the sequences generated by \eqref{S-W}.

Nevertheless, the following proposition shows the equivalence between CIRS and \eqref{S-W} with \eqref{smo_eta}, and can be derived from the statements in \cite{Aihara_2019,Gutknecht_2001,Zhou_1994}. 
The proof is not included in this paper, but it can easily be shown by induction.

\begin{proposition}\label{Prop2}
For given primary sequences $\{X_k\}$ and $\{R_k\}$, let $\hat Y_k$, $\hat S_k$, and $\hat \eta_k$ be the $k$th approximation, smoothed residual, and smoothing parameter, respectively, generated by \eqref{CIRS_V_U} and \eqref{CIRS_Y_S}, and let $Y_k$, $S_k$, and $\eta_k$ be those generated by \eqref{S-W} with \eqref{smo_eta}. 
Then, the identities $\hat Y_k = Y_k$ and $\hat S_k = S_k$ hold for $k=0,1,2,\dots$ and $\hat \eta_k = \eta_k$ for $k=1,2,\dots$. 
\end{proposition}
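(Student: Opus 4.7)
The plan is to proceed by induction on $k$ and rewrite the CIRS update rules in terms of the primary and smoothed quantities, using \Cref{Prop1} to eliminate the auxiliary matrices $\hat V_k$ and $\hat U_k$ from the analysis. The base case is immediate from the initialization $\hat Y_0 = Y_0 = X_0$ and $\hat S_0 = S_0 = R_0$ in both schemes. For the inductive step, I would assume $\hat Y_{k-1}=Y_{k-1}$ and $\hat S_{k-1}=S_{k-1}$ (and, for $k\geq 2$, $\hat \eta_{k-1}=\eta_{k-1}$), and show successively that $\hat \eta_k = \eta_k$, $\hat S_k = S_k$, and $\hat Y_k = Y_k$.

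The decisive observation is that the cross-interactive update of $\hat U_k$ in \Cref{alg1} (which accumulates $A\hat P_{k-1}$ with a scaled copy of the previous auxiliary residual) combined with the identity $\hat S_{k-1} - (1-\hat \eta_{k-1})\hat U_{k-1} = R_{k-1}$ from \Cref{Prop1} and the primary recursion $R_k = R_{k-1} - A\hat P_{k-1}$ yields the compact expression $\hat U_k = \hat S_{k-1} - R_k$. Symmetrically, the update of $\hat V_k$ together with $\hat Y_{k-1} + (1-\hat \eta_{k-1})\hat V_{k-1} = X_{k-1}$ and $X_k = X_{k-1} + \hat P_{k-1}$ gives $\hat V_k = X_k - \hat Y_{k-1}$. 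These two identities are the crux: plugging them into the local-minimization formula $\hat \eta_k = \langle \hat S_{k-1}, \hat U_k\rangle_F / \|\hat U_k\|^2$ used in \Cref{alg1} and using the inductive hypothesis $\hat S_{k-1}=S_{k-1}$ converts the expression into
\begin{equation*}
\hat \eta_k = \frac{\langle S_{k-1}, S_{k-1}-R_k\rangle_F}{\|S_{k-1}-R_k\|^2} = -\frac{\langle S_{k-1}, R_k-S_{k-1}\rangle_F}{\|R_k-S_{k-1}\|^2} = \eta_k,
\end{equation*}
matching \eqref{smo_eta} exactly.

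With $\hat \eta_k = \eta_k$ established, I would then substitute the expressions for $\hat U_k$ and $\hat V_k$ into the final smoothing steps $\hat S_k = \hat S_{k-1} - \hat \eta_k \hat U_k$ and $\hat Y_k = \hat Y_{k-1} + \hat \eta_k \hat V_k$ of \Cref{alg1}. This produces $\hat S_k = S_{k-1} - \eta_k(S_{k-1}-R_k) = (1-\eta_k)S_{k-1} + \eta_k R_k = S_k$ and, analogously, $\hat Y_k = (1-\eta_k)Y_{k-1} + \eta_k X_k = Y_k$, closing the induction.

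The main obstacle is really bookkeeping rather than anything conceptual: one must handle carefully both the $\alpha_k\in\mathbb{R}$ and $\tilde\alpha_k\in\mathbb{R}^{s\times s}$ variants of the primary recursion so that the identities for $\hat U_k$ and $\hat V_k$ hold uniformly (the $A\hat P_{k-1}$ and $\hat P_{k-1}$ increments need to be read off consistently from whichever of \eqref{X_R} or \eqref{B_X_R} is in force). Since \Cref{Prop1} has already been stated for both cases and the argument above depends only on the telescoped combination of the two identities in \eqref{SI_new} with the primary updates, this case distinction is a purely notational matter. As the authors suggest, the proof is essentially a direct transcription of the derivation in \cite{Aihara_2019,Gutknecht_2001,Zhou_1994} to the global/block setting with the Frobenius inner product replacing the Euclidean one, and can reasonably be left to the reader.
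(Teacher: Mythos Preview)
Your proposal is correct. The paper does not supply its own proof of \Cref{Prop2}---it explicitly leaves the argument to the reader, noting only that it follows from the results in \cite{Aihara_2019,Gutknecht_2001,Zhou_1994}---and your induction via the key identities $\hat U_k = \hat S_{k-1} - R_k$ and $\hat V_k = X_k - \hat Y_{k-1}$ (obtained by combining the recursion for $\hat V_k$ with \Cref{Prop1} and the primary updates) is exactly the natural route and matches the derivation in those references transplanted to the Frobenius inner product.
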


As a result, we can observe that the smoothed residual does not converge slower than the primary one $\|\hat S_k\| \leq \|R_k\|$ and that the smoothed residual norm decreases monotonically $\|\hat S_k\| \leq \|\hat S_{k-1}\|$.

\Cref{alg1} is an extension of CIRS that can be applied to global and block methods. 
If the $k$th primary approximation is updated in the form of $X_{k+1} = X_k + \alpha_k P_k$ with $\alpha_k \in \mathbb{R}$ or $X_{k+1} = X_k + P_k\alpha_k^\square$ with $\alpha_k^\square \in \mathbb{R}^{s\times s}$, then $\hat P_k$ is defined as $\alpha_kP_k$ or $P_k\alpha_k^\square$, respectively. 
In line~5, although a matrix multiplication by $A$ is required, the total number of multiplications by $A$ per iteration in the primary method and in the corresponding smoothed method can be made the same through a sophisticated formulation; for details, see \cref{sec5}.

\begin{algorithm}[t]
\caption{CIRS for global and block methods}\label{alg1}
\begin{algorithmic}[1]
\REQUIRE An initial guess $X_0$ and the initial residual $R_0:=B-AX_0$. 
\STATE Set $\hat Y_0 := X_0$, $\hat S_0 := R_0$, $\hat V_0 := O$, and $\hat \eta_0 := 0$.
\FOR{$k=0,1,2,\dots$ until convergence}
	\STATE Compute $\hat P_k$ (corresponding to $X_{k+1}-X_k$) using the primary method.
	\STATE $\hat V_{k+1} = (1-\hat \eta_k) \hat V_k + \hat P_k$
	\STATE Compute $\hat U_{k+1} = A\hat V_{k+1}$ with an explicit multiplication by $A$.
	\STATE $\hat \eta_{k+1} = \langle \hat S_k, \hat U_{k+1}\rangle_F / \langle \hat U_{k+1}, \hat U_{k+1} \rangle_F$
	\STATE $\hat Y_{k+1} = \hat Y_k + \hat \eta_{k+1}\hat V_{k+1},\quad \hat S_{k+1} = \hat S_k - \hat \eta_{k+1}\hat U_{k+1}$
	\RETURN $X_{k+1} = \hat Y_{k+1} + (1-\hat \eta_{k+1})\hat V_{k+1},\quad R_{k+1} = \hat S_{k+1} - (1-\hat \eta_{k+1})\hat U_{k+1}$
\ENDFOR
\end{algorithmic}
\end{algorithm}

\section{Advantage of CIRS}\label{sec4}

We discuss the residual gap for the presented residual smoothing schemes in finite precision arithmetic. 
We refer to \cref{sec2} for matrix operations, taking into account the rounding errors.

We first consider the simple smoothing scheme outlined in \cref{sect_standard}. 
Although smooth convergence behavior can be achieved by \eqref{S-W}, the residual gap cannot be reduced because of the following property. 
\Cref{Theorem3} below is a naive extension of several results for a single right-hand side in \cite[sections~4.1 and 5.1]{Gutknecht_2001} to the case of multiple right-hand sides. 
The proof is not included in this paper, but it follows naturally from \cite{Gutknecht_2001}.

\begin{theorem}[cf.~\cite{Gutknecht_2001}]\label{Theorem3}
Let $\{X_k\}$ and $\{R_k\}$ be the primary sequences of the approximations and residuals, respectively, which are generated by \eqref{X_R} or \eqref{B_X_R}, and let $\{Y_k\}$ and $\{S_k\}$ be the corresponding smoothed sequences that are generated by \eqref{S-W} in finite precision arithmetic. 
Then, the norm of the residual gap $G_{S_k} :=  (B - AY_k) - S_k$ is bounded as follows: 
\begin{align}
&\|G_{S_k}\| \leq |1-\eta_k|\|G_{S_{k-1}}\|+ |\eta_k| \|G_{R_k}\| +\|A\|\|E_{Y_k}\| + \|E_{S_k}\|, \label{gap2}
\end{align}
where $E_{Y_k}$ and $E_{S_k}$ are the local errors in the updated approximation $Y_k$ and residual $S_k$, respectively, and these errors satisfy 
\begin{align}
\begin{split}
\|E_{Y_k}\| \leq 3{\bf u}|1-\eta_k| \|Y_{k-1}\| + 2{\bf u}|\eta_k|\|X_k\|, \\
\|E_{S_k}\| \leq 3{\bf u}|1-\eta_k| \|S_{k-1}\| + 2{\bf u}|\eta_k|\|R_k\|. 
\end{split}\label{local1}
\end{align}
\end{theorem}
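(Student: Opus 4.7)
The plan is to mimic the structure of the proof of Theorem~\ref{Theorem1}, but applied to the smoothing recursion \eqref{S-W} instead of the primary update \eqref{X_R}/\eqref{B_X_R}. First I would write the finite-precision counterparts of \eqref{S-W} in the form
\begin{align*}
Y_k = (1-\eta_k)Y_{k-1} + \eta_k X_k + E_{Y_k},\qquad
S_k = (1-\eta_k)S_{k-1} + \eta_k R_k + E_{S_k},
\end{align*}
where $E_{Y_k}$ and $E_{S_k}$ aggregate the local rounding errors introduced by the two scalar multiplications and the outer addition on each line.

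Next I would substitute these expressions into $G_{S_k} = (B-AY_k) - S_k$ and exploit the exact-arithmetic identity $(1-\eta_k)B + \eta_k B = B$ to split the constant term cleanly between the two ``input'' gaps, giving
\begin{align*}
G_{S_k} = (1-\eta_k)\bigl[(B-AY_{k-1}) - S_{k-1}\bigr] + \eta_k \bigl[(B-AX_k) - R_k\bigr] - A E_{Y_k} - E_{S_k}.
\end{align*}
Identifying the bracketed quantities as $G_{S_{k-1}}$ and $G_{R_k}$, then taking norms and applying the triangle inequality, immediately yields \eqref{gap2}. This step is essentially the same substitution trick used to pass from \eqref{gap_Th1} to a recursive gap identity, except that the convex-combination structure allows $B$ to cancel without telescoping backward to $X_0$.

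For the local error bounds \eqref{local1}, I would expand $Y_k$ as the concrete sequence of floating-point operations $\mathrm{fl}(\mathrm{fl}((1-\eta_k)Y_{k-1}) + \mathrm{fl}(\eta_k X_k))$ and apply the scalar-multiplication and addition models from \cref{sec2} at each of the three elementary stages. Bounding the intermediate sum by $|1-\eta_k|\|Y_{k-1}\| + |\eta_k|\|X_k\|$ (rather than by $\|Y_k\|$ plus a correction), combining the three resulting error contributions, and dropping $O({\bf u}^2)$ terms produces the stated coefficients; the same calculation applied componentwise to $S_k$ gives the second inequality in \eqref{local1} by verbatim substitution of $(Y_{k-1},X_k)$ by $(S_{k-1},R_k)$.

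I do not anticipate a conceptual obstacle: the work is entirely analogous to the bookkeeping inside the proof of Theorem~\ref{Theorem1}. The only mildly delicate point is making the constants in \eqref{local1} come out exactly $3$ and $2$, which pins down how one chooses to bound the norm of the intermediate quantity $\mathrm{fl}((1-\eta_k)Y_{k-1})$ versus $\mathrm{fl}(\eta_k X_k)$ when accounting for the outer addition error; a careless split would yield symmetric coefficients or a $\|Y_k\|$ term rather than the $\|Y_{k-1}\|$ term in \eqref{local1}. Apart from this, the proof is a direct transcription of the template.
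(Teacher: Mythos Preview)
Your plan is correct and matches the approach the paper implicitly intends (the proof is omitted there as a routine extension of \cite{Gutknecht_2001} along the lines of Theorem~\ref{Theorem1}). One small correction on the point you flagged: the asymmetric constants $3$ and $2$ in \eqref{local1} arise not from how you split the outer-addition error but from the extra rounding incurred when forming $\mathrm{fl}(1-\eta_k)$ before the first scalar multiplication; the addition and two scalar multiplications alone contribute symmetrically and would give $2$ and $2$.
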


In actual computations, the residual norm $\|R_k\|$ of a Lanczos-type solver often increases significantly in the early iterations. 
According to the property of residual smoothing, $|\eta_k|$ is selected to be small when $\|R_k\|$ becomes large; that is, it can be expected from \eqref{gap2} and \eqref{local1} that the residual gap and local errors in the smoothed sequences are relatively small, although $\|R_k\| \gg \|B\| = 1$ holds. 
However, after $\|R_k\|$ increases significantly, the subsequent iterations move into a stage where $\|R_k\| \approx \|B\|$ and $|\eta_k| \approx 1$. 
Furthermore, $|\eta_k|\|G_{R_k}\|$ is the dominant part of the right-hand side in \eqref{gap2}, and $\|G_{S_k}\|$ increases drastically to the same order of magnitude as $\|G_{R_k}\|$. 
We present such a phenomenon in the numerical experiments in \cref{sec7.1}.

Next, we consider the new CIRS displayed in \Cref{alg1}. 
\Cref{Theorem4,Theorem5} are extensions of the main results for the original CIRS; that is, \cite[Eqs.~(3.12) and (3.18)]{Aihara_2019}, for the case of multiple right-hand sides.

As the recursion formulas for updating $\hat Y_k$ and $\hat S_k$ in line~7 of \Cref{alg1} have the same forms as \eqref{X_R}, we obtain the following result for the residual gap of the smoothed sequences, as in \Cref{Theorem1} for that of the primary sequences.

\begin{theorem}\label{Theorem4}
Let $\hat Y_k \in \mathbb{F}^{n\times s}$ and $\hat S_k \in \mathbb{F}^{n\times s}$ be the $k$th approximation and smoothed residual, respectively, which are generated by \Cref{alg1} in finite precision arithmetic. 
Then, the norm of the residual gap $G_{\hat S_k} := (B - A\hat Y_k) - \hat S_k$ is bounded as follows: 
\begin{align}
\|G_{\hat S_k}\| \leq k(5+2m){\bf u} \|A\| \max_{0<j\leq k} \|\hat Y_j\| + 5(k+1){\bf u} \max_{0\leq j \leq k} \|\hat S_j\|. \label{gap3} 
\end{align}
\end{theorem}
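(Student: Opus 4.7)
The plan is to observe that the updates of $\hat Y_k$ and $\hat S_k$ in line~7 of \Cref{alg1} take precisely the form \eqref{X_R}, with $\hat V_{k+1}$ (computed in line~4) playing the role of the direction matrix $P_k$ and the scalar $\hat \eta_{k+1}$ playing the role of $\alpha_k$. Crucially, $\hat U_{k+1} = A \hat V_{k+1}$ is obtained in line~5 by an explicit matrix multiplication, so the rounding error model for $\mathrm{fl}(AP)$ applies exactly as in the proof of \Cref{Theorem1}. I therefore expect to recycle that proof almost verbatim under the substitutions $X_k \leftrightarrow \hat Y_k$, $R_k \leftrightarrow \hat S_k$, $P_{k-1} \leftrightarrow \hat V_k$, and $\alpha_{k-1} \leftrightarrow \hat \eta_k$.

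The first step is to evaluate the local errors in $\hat Y_k = \mathrm{fl}(\hat Y_{k-1} + \mathrm{fl}(\hat \eta_k \hat V_k))$ and $\hat S_k = \mathrm{fl}(\hat S_{k-1} - \mathrm{fl}(\hat \eta_k \hat U_k))$ using the scalar-multiplication, addition, and matrix-multiplication error models from \cref{sec2}. Bounding $\|\hat \eta_k \hat V_k\| \leq \|\hat Y_k\| + \|\hat Y_{k-1}\|$ up to $O({\bf u})$ terms (since $\hat \eta_k \hat V_k$ equals $\hat Y_k - \hat Y_{k-1}$ modulo the final addition error), this produces local bounds of exactly the same shape as in \Cref{Theorem1}, namely $\|E_{\hat Y_k}\| \leq {\bf u}(3\|\hat Y_{k-1}\| + 2\|\hat Y_k\|)$ and $\|E_{\hat S_k}\| \leq {\bf u}(3\|\hat S_{k-1}\| + 2\|\hat S_k\|) + m{\bf u}\|A\|(\|\hat Y_{k-1}\| + \|\hat Y_k\|)$. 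The second step is to telescope the gap: since $\hat Y_0 = X_0 = O$ and $\hat S_0 = R_0 = B$, the identity $(B - A\hat Y_0) - \hat S_0 = O$ holds, and summing the per-step identities yields
\[
(B - A\hat Y_k) - \hat S_k = -A \sum_{j=1}^{k} E_{\hat Y_j} + \sum_{j=1}^{k} E_{\hat S_j}.
\]
Taking norms, substituting the local bounds, and replacing each sum by its maximum (with the same coefficient arithmetic as in \eqref{gap_Th1}) delivers \eqref{gap3}.

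There is no substantive obstacle; the only point worth double-checking is that the errors in the auxiliary computations of $\hat V_{k+1}$ (line~4) and of $\hat \eta_{k+1}$ (line~6) need not be tracked in this argument, because both are absorbed into the floating-point operands against which the error bounds for lines~5 and~7 are written. This is the same asymmetry that makes the proof of \Cref{Theorem1} indifferent to how $P_k$ is formed, and it is exactly what permits a line-for-line transcription of that earlier proof.
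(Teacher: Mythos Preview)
Your proposal is correct and matches the paper's own approach: the paper does not write out a separate proof of \Cref{Theorem4} but simply notes, just before the statement, that the updates of $\hat Y_k$ and $\hat S_k$ in line~7 of \Cref{alg1} have the same form as \eqref{X_R}, so the result follows exactly as in \Cref{Theorem1}. Your detailed transcription (including the local error bounds $\|E_{\hat Y_k}\| \leq {\bf u}(3\|\hat Y_{k-1}\| + 2\|\hat Y_k\|)$ and $\|E_{\hat S_k}\| \leq {\bf u}(3\|\hat S_{k-1}\| + 2\|\hat S_k\|) + m{\bf u}\|A\|(\|\hat Y_{k-1}\| + \|\hat Y_k\|)$) coincides with the bounds the paper records explicitly in the proof of \Cref{Theorem5}, and your observation that the rounding errors in forming $\hat V_{k+1}$ and $\hat\eta_{k+1}$ are absorbed into the operands is precisely the implicit assumption used throughout.
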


As the smoothed residual norm $\|\hat S_k\|$ decreases monotonically, the upper bound in \eqref{gap3} is relatively small compared to that in \eqref{gap1}, and the residual gap is expected to be reduced. 
Moreover, as the primary and smoothed sequences influence one another in \Cref{alg1}, the evaluation of the residual gap for the primary sequences is also improved over \eqref{gap1} as follows:

\begin{theorem}\label{Theorem5}
Let $X_k \in \mathbb{F}^{n\times s}$ and $R_k \in \mathbb{F}^{n\times s}$ be the $k$th approximation and residual, respectively, which are generated by \Cref{alg1} in finite precision arithmetic. 
Then, the norm of the residual gap $G_{R_k} := (B - AX_k) - R_k$ is bounded as follows: 
\begin{align}
\begin{split}
&\|G_{R_k}\| \leq k(5+2m){\bf u} \|A\| \max_{0<j\leq k} \|\hat Y_j\| + 5(k+1) {\bf u} \max_{0\leq j \leq k} \|\hat S_j\|\\
&\hphantom{\|G_{R_k}\|\leq}\quad + (2+m){\bf u} \|A\| \|X_k\| + 2{\bf u} \|R_k\|.
\end{split} \label{gap4}
\end{align}
\end{theorem}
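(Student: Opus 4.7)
The plan is to exploit the return step (line 8 of \Cref{alg1}),
\begin{equation*}
X_k = \hat Y_k + (1-\hat \eta_k)\hat V_k,\qquad R_k = \hat S_k - (1-\hat \eta_k)\hat U_k,
\end{equation*}
which in exact arithmetic, together with $\hat U_k = A\hat V_k$, produces a cancellation: substituting into $B - AX_k - R_k$ leaves exactly $(B-A\hat Y_k)-\hat S_k = G_{\hat S_k}$. Thus the primary residual gap equals the smoothed residual gap, plus whatever local rounding errors enter when line 8 is executed in finite precision. This cancellation is the crux of why CIRS improves the bound compared with \eqref{gap1}: the per-step accumulation is governed by $\hat Y_k,\hat S_k$ rather than by $X_k,R_k$, and \Cref{Theorem4} already handles that growth.

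Concretely, I would apply the matrix operation models from \cref{sec2} to the two assignments in line 8 and to the product $\hat U_k = \text{fl}(A\hat V_k)$. Writing
\begin{equation*}
X_k = \hat Y_k + (1-\hat \eta_k)\hat V_k + E_{X_k},\qquad R_k = \hat S_k - (1-\hat \eta_k)\hat U_k + E_{R_k},\qquad \hat U_k = A\hat V_k + E_{\hat U_k},
\end{equation*}
subtraction gives
\begin{equation*}
G_{R_k} = (B-AX_k) - R_k = G_{\hat S_k} - A E_{X_k} + (1-\hat \eta_k)E_{\hat U_k} - E_{R_k},
\end{equation*}
so a triangle inequality plus \Cref{Theorem4} reduces the task to bounding the three local-error terms.

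For the local-error bounds I would mimic the two-stage accounting used in the proof of \Cref{Theorem1} (first the scalar-matrix product, then the sum), with the small simplification that here $1-\hat \eta_k$ is a scalar. The key auxiliary estimate is the exact identity $(1-\hat \eta_k)\hat V_k = X_k - \hat Y_k$ (up to $O(\mathbf{u})$), which yields $|1-\hat \eta_k|\,\|\hat V_k\| \leq \|X_k\| + \|\hat Y_k\|$, and similarly $|1-\hat \eta_k|\,\|\hat U_k\| \leq \|\hat S_k\| + \|R_k\|$. Using these I expect
\begin{equation*}
\|E_{X_k}\| \lesssim \mathbf{u}(\,\|\hat Y_k\|+\|X_k\|\,),\quad \|E_{R_k}\| \lesssim \mathbf{u}(\,\|\hat S_k\|+\|R_k\|\,),\quad |1-\hat\eta_k|\,\|E_{\hat U_k}\| \lesssim m\mathbf{u}\|A\|(\|X_k\|+\|\hat Y_k\|),
\end{equation*}
so that $\|A\|\|E_{X_k}\| + |1-\hat \eta_k|\|E_{\hat U_k}\|$ contributes $(2+m)\mathbf{u}\|A\|\|X_k\|$ plus an $O(\mathbf{u}\|A\|\|\hat Y_k\|)$ piece, and $\|E_{R_k}\|$ contributes $2\mathbf{u}\|R_k\|$ plus an $O(\mathbf{u}\|\hat S_k\|)$ piece.

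The final step is to add \Cref{Theorem4} and absorb the stray $\|\hat Y_k\|$ and $\|\hat S_k\|$ contributions into the maxima already present in \eqref{gap3}, which costs only a lower-order bump in the constants in front of $k$ and yields \eqref{gap4}. The main obstacle I anticipate is the careful bookkeeping of the scalar-by-$(1-\hat\eta_k)$ round-offs together with the addition round-offs so as to express every local error purely in terms of $\|X_k\|,\|R_k\|,\|\hat Y_k\|,\|\hat S_k\|$ (rather than in terms of the auxiliary $\|\hat V_k\|$ or $\|\hat U_k\|$, whose sizes are not otherwise controlled); the exact identity on line 8 is what makes this elimination possible, and that observation is what converts the argument for $G_{\hat S_k}$ in \Cref{Theorem4} into the tighter companion estimate \eqref{gap4} for $G_{R_k}$.
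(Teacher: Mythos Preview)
Your proposal is correct and follows essentially the same route as the paper: write the return step (line~8) with local errors, use the identity $(1-\hat\eta_k)\hat V_k = X_k-\hat Y_k$ (and its residual analogue) to express those errors via $\|X_k\|,\|R_k\|,\|\hat Y_k\|,\|\hat S_k\|$, and reduce $G_{R_k}$ to $G_{\hat S_k}$ plus the local terms. The only minor discrepancy is the last absorption step: citing \Cref{Theorem4} as a black box and then adding the stray $(3+m)\mathbf{u}\|A\|\|\hat Y_k\|$ and $3\mathbf{u}\|\hat S_k\|$ would overshoot the constants in \eqref{gap4}; the paper instead redoes the telescoping sum $\sum_j(\|E_{\hat Y_j}\|+\|E_{\hat S_j}\|)$ so that the slack left at index $j=k$ exactly swallows these extras, giving \eqref{gap4} with no bump.
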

\begin{proof}
Similarly to the proof of \Cref{Theorem1}, the local errors in $\hat Y_k$, $\hat S_k$, $X_k$, and $R_k$ can be evaluated as follows: 
\begin{align*}
&\hat Y_k = \hat Y_{k-1} + \hat \eta_k \hat V_k + E_{\hat Y_k},\quad \|E_{\hat Y_k}\| \leq {\bf u} (3\|\hat Y_{k-1}\| + 2\|\hat Y_k\|), \\
&\hat S_k = \hat S_{k-1} - \hat \eta_k A \hat V_k - E_{\hat S_k},\quad \|E_{\hat S_k}\| \leq {\bf u} (3\|\hat S_{k-1}\| + 2\|\hat S_k\|) + m{\bf u} \|A\|(\|\hat Y_{k-1}\| + \|\hat Y_k\|), \\
&X_k = \hat Y_k + \hat \zeta_k \hat V_k + E_{X_k},\quad \|E_{X_k}\| \leq {\bf u} (3\|\hat Y_k\| + 2\|X_k\|), \\
&R_k = \hat S_k - \hat \zeta_k A \hat V_k - E_{R_k},\quad \|E_{R_k}\| \leq {\bf u} (3\|\hat S_k\| + 2\|R_k\|) + m{\bf u} \|A\|(\|\hat Y_k\| + \|X_k\|), 
\end{align*}
where $\hat \zeta_k := \text{fl}(1-\hat \eta_k)$. 
Subsequently, the norm of the residual gap can be bounded as follows: 
\begin{align*}
&\|(B-AX_{k}) - R_k\|\\
&\quad \leq \|(B-A\hat Y_k) - \hat S_k\| +  \|A\|\|E_{X_k}\| + \|E_{R_k}\|\\
&\quad \leq \|(B-A\hat Y_{k-1}) - \hat S_{k-1}\| + \|A\|\|E_{\hat Y_k}\| + \|E_{\hat S_k}\| +  \|A\|\|E_{X_k}\| + \|E_{R_k}\|\\
&\quad\leq \|A\|\sum_{j=1}^k\|E_{\hat Y_j}\| + \sum_{j=1}^k\|E_{\hat S_j}\|  + \|A\|\|E_{X_k} \|+ \|E_{R_k}\|\\
&\quad\leq (5+2m){\bf u}\|A\|\sum_{j=1}^k\|\hat Y_j\| + 5{\bf u}\sum_{j=0}^k\|\hat S_j\|+ (2+m){\bf u} \|A\| \|X_k\| + 2{\bf u} \|R_k\|.
\end{align*}
The proof is completed by bounding the terms in the first and second sums with $k\max_j \|\hat Y_j\|$ and $(k+1)\max_j \|\hat S_j\|$, respectively.
\end{proof}

The upper bound in \eqref{gap4} is dependent on $\max_j \|\hat Y_j\|$ and $\max_j\|\hat S_j\|$ for $j \leq k$, and only the $k$th norms $\|X_k\|$ and $\|R_k\|$. 
Therefore, similarly to the case of the single right-hand side \cite{Aihara_2019}, $\|G_{R_k}\|$ can increase when $\|R_k\|$ increases, but it can be reduced as $\|R_j\|$ decreases for $j > k$. 
The upper bounds in \eqref{gap3} and \eqref{gap4} of the final residual gap are of the same order of magnitude when the primary and smoothed sequences converge, and $\hat Y_k$ and $X_k$ in \Cref{alg1} are expected to attain the same level of accuracy.

\section{Specific smoothed algorithms}\label{sec5}

In this section, we apply CIRS to Gl-CGS2 \cite{Zhang_2010}, Gl-BiCGSTAB \cite{Jbilou_2005}, Bl-BiCGSTAB \cite{ElGuennouni_2003}, and Bl-BiCGGR \cite{Tadano_2009}. 
For the block methods, we consider their stabilized variants that are used with strategies that orthonormalize the columns of the iteration matrices, namely Bl-BiCGSTABpQ \cite[Algorithm~2.1]{Nakamura_2012} and Bl-BiCGGRrQ \cite[Fig.~2]{Tadano_2017}.

\subsection{Smoothed variant of Gl-CGS2}

As Gl-CGS2 uses the forms \eqref{X_R}, we can apply CIRS directly by setting $\hat P_k := \alpha_k U_k + \tilde \alpha_k S_k$ in \Cref{alg1}, where the matrix $\alpha_k U_k + \tilde \alpha_k S_k$ is provided in \cite[line~13 of Algorithm~5]{Zhang_2010}.

\begin{algorithm}[t]
\caption{Smoothed global CGS2 (S-Gl-CGS2)}\label{alg2}
\begin{algorithmic}[1]
\STATE Select an initial guess $X$ and compute $R = B-AX$.
\STATE Set $\hat Y := X$, $\hat S := R$, $\hat V := O$, and $\hat \zeta := 0$.
\STATE Select $R_0^{\bullet}$ and $R_0^{\circ}$, and compute $Z_0^{\bullet} = A^\top R_0^{\bullet}$ and $Z_0^{\circ} = A^\top R_0^{\circ}$.
\STATE Set $P := R$, $U := R$, and $T := R$.
\WHILE{$\|\hat S\| > tol$}
	\STATE $V = AP,\quad \sigma^{\bullet} = \langle R_0^{\bullet}, V\rangle_F,\quad \sigma^{\circ} = \langle R_0^{\circ}, V\rangle_F$
	\STATE $\alpha^{\bullet} = \langle R_0^{\bullet}, R\rangle_F / \sigma^{\bullet},\quad \alpha^{\circ} = \langle R_0^{\circ}, R\rangle_F / \sigma^{\circ}$
	\STATE $W = T - \alpha^{\bullet} V,\quad Q = U - \alpha^{\circ} V$
	\STATE $\hat P = \alpha^{\bullet}U + \alpha^{\circ} W,\quad \hat V = \hat \zeta \hat V + \hat P,\quad \hat U = A \hat V$
	\STATE $\hat \eta = \langle \hat S, \hat U\rangle_F / \langle \hat U, \hat U \rangle_F,\quad \hat \zeta = 1-\hat \eta$
	\STATE $\hat Y = \hat Y + \hat \eta \hat V,\quad \hat S = \hat S - \hat \eta \hat U$
	\STATE $X = \hat Y + \hat \zeta \hat V,\quad R = \hat S - \hat \zeta \hat U$
	\STATE $\beta^{\bullet} = \langle Z_0^{\bullet}, W\rangle_F / \sigma^{\bullet},\quad \beta^{\circ} = \langle Z_0^{\circ}, Q\rangle_F / \sigma^{\circ}$
	\STATE $U = R - \beta^{\bullet} Q,\quad T = R - \beta^{\circ} W,\quad P = T - \beta^{\bullet} (Q - \beta^{\circ} P)$
\ENDWHILE
\end{algorithmic}
\end{algorithm}

\Cref{alg2} displays the resulting smoothed Gl-CGS2 method (S-Gl-CGS2), where $\alpha_k U_k + \tilde \alpha_k S_k$ is renamed as $\alpha^{\bullet} U + \alpha^{\circ} W$. 
Lines~9--12 of \Cref{alg2} correspond to CIRS and the other lines follow from Gl-CGS2. 
In line~9, we use an explicit multiplication by $A$, but we do not need to compute $A\hat P_k$, which is required in Gl-CGS2. 
Thus, the smoothed variant can be implemented without additional multiplications by $A$. 
Note that Gl-CGS2 \cite[Algorithm~5]{Zhang_2010} uses four multiplications by $A$ per iteration, but two of these can be reduced by computing $A^\top R_0^{\bullet}$ and $A^\top R_0^{\circ}$ in advance and storing them if an operator $A^\top$ is available, and this efficient approach is also incorporated into \Cref{alg2}. Here, $R_0^{\bullet}$ and $R_0^{\circ}$ are the initial shadow residuals used in Gl-CGS2. 
The initial shadow residual $R_0^{\bullet}$ is a starting matrix to construct $\mathcal K_k^G(A^\top, R_0^{\bullet})$, which is used in a bi-orthogonality condition for the underlying Gl-BiCG residuals. 
Many global Lanczos-type solvers, such as Gl-CGS and Gl-BiCGSTAB, require one initial shadow residual, but Gl-CGS2 uses another shadow residual $R_0^{\circ}$ to determine the so-called stabilizing polynomials. 
We refer the reader to \cite{Fokkema_1996,Jbilou_2005,Zhang_2010} for details.

Note that the derivation of Gl-CGS2 in \cite{Zhang_2010} is different from that of the original CGS2 for a single linear system in \cite{Fokkema_1996}. 
Another Gl-CGS2 algorithm and its smoothed variant can be derived based on \cite{Fokkema_1996}, and this approach also requires no additional multiplications by $A$. 
However, as our main purpose is to illustrate the effectiveness of CIRS, we do not discuss alternative algorithms of the primary method further.

\begin{algorithm}[t]
\caption{Smoothed global BiCGSTAB (S-Gl-BiCGSTAB)}\label{alg3}
\begin{algorithmic}[1]
\STATE Select an initial guess $X$ and compute $R = B-AX$.
\STATE Set $\hat Y := X$, $\hat S := R$, $\hat V := O$, and $\hat \zeta := 0$.
\STATE Select $R_0^{\bullet}$ and compute $Z_0^{\bullet} = A^\top R_0^{\bullet}$.
\STATE Set $P := R$, $R' := O$, and $\omega := 0$.
\WHILE{$\|\hat S\| > tol$}
	\STATE $\sigma = \langle Z_0^{\bullet}, P\rangle_F,\quad \alpha = \langle R_0^{\bullet}, R\rangle_F / \sigma$
	\STATE $\hat P = \omega R' + \alpha P,\quad \hat V = \hat \zeta \hat V + \hat P,\quad \hat U = A \hat V$
	\STATE $\hat \eta = \langle \hat S, \hat U\rangle_F / \langle \hat U, \hat U \rangle_F,\quad \hat \zeta = 1-\hat \eta$
	\STATE $\hat Y = \hat Y + \hat \eta \hat V,\quad \hat S = \hat S - \hat \eta \hat U$
	\STATE $X' = \hat Y + \hat \zeta \hat V,\quad R' = \hat S - \hat \zeta \hat U$
	\STATE $V = (R - R')/\alpha,\quad T = A R',\quad \omega = \langle R', T\rangle_F / \langle T, T \rangle_F$
	\STATE $X = X' + \omega R',\quad R = R' - \omega T$
	\STATE $\beta = \langle R_0^{\bullet}, T\rangle_F / \sigma,\quad P = R - \beta (P - \omega V)$
\ENDWHILE
\end{algorithmic}
\end{algorithm}

\subsection{Smoothed variant of Gl-BiCGSTAB}\label{sec5.2}

Gl-BiCGSTAB updates two residuals in the forms \eqref{X_R}: $R_k' := R_k - \alpha_k A P_k$ in the BiCG part and $R_{k+1} = R_k' - \omega_k AR_k'$ in the polynomial part, where $\alpha_k = \langle R_0^{\bullet}, R_k\rangle_F / \langle R_0^{\bullet}, AP_k\rangle_F$ with an iteration matrix $P_k$ and $\omega_k = \langle R_k', AR_k'\rangle_F / \langle AR_k', AR_k'\rangle_F$.
Therefore, we can apply CIRS to both the BiCG and polynomial parts. 
However, this requires two additional multiplications by $A$ per iteration. 
To circumvent this issue, based on \cite[section~4]{Aihara_2019}, we reformulate the updating process so that no additional multiplications by $A$ are required; see also \cite[section~4.6]{Sleijpen_1996}. 
We consider the recursion formulas of the approximation and residual in the BiCG part, 
\begin{align*}
X_k' = X_{k-1}' + (\omega_{k-1} R_{k-1}' + \alpha_k P_k), \quad R_k' = R_{k-1}' - A(\omega_{k-1} R_{k-1}' + \alpha_k P_k), 
\end{align*}
and perform CIRS by setting $\hat P_k: = \omega_{k-1} R_{k-1}' + \alpha_k P_k$. 
Thereafter, we compute $\alpha_k =\break \langle R_0^{\bullet}, R_k\rangle_F/\langle Z_0^{\bullet}, P_k\rangle_F$, where $Z_0^{\bullet} := A^\top R_0^{\bullet}$ is computed and stored in advance. 
To update $P_k$ to $P_{k+1}$ via $AP_k$, we use the backward formulation $AP_k := (R_k - R_k')/\alpha_k$ following the computation of $R_k'$. 
The resulting smoothed Gl-BiCGSTAB method (S-Gl-BiCGSTAB), which requires no additional multiplications by $A$, is presented in \Cref{alg3}.

\subsection{Smoothed variant of Bl-BiCGSTABpQ}

We apply CIRS to Bl-BiCGSTABpQ using the same approach as in \cref{sec5.2}. 
We rewrite \eqref{G_B_X_R} as 
\begin{align*}
X_k' = X_{k-1}' + (\omega_{k-1} R_{k-1}' + Q_k\alpha_k^\square), \quad R_k' = R_{k-1}' - A(\omega_{k-1} R_{k-1}' + Q_k\alpha_k^\square), 
\end{align*}
and set $\hat P_k: = \omega_{k-1} R_{k-1}' + Q_k\alpha_k^\square$ in \Cref{alg1}. 
The matrix $\alpha_k^\square \in \mathbb{R}^{s\times s}$ is obtained by solving an $s$-dimensional linear system $({Z_0^{\bullet}}^\top Q_k) \alpha_k^\square ={R_0^{\bullet}}^\top R_k$, where $Z_0^{\bullet} := A^\top R_0^{\bullet}$ is computed and stored in advance. 
Following the computation of $R_k'$, the matrix $AQ_k$ is provided as a solution of the system $(AQ_k) \alpha_k^\square = R_k - R_k'$. 
The resulting smoothed Bl-BiCGSTABpQ method (S-Bl-BiCGSTABpQ), which also requires no additional multiplications by $A$, is displayed in \Cref{alg4}. 
Note that $\textbf{qf}(\cdot)$ in line~6 denotes the Q-factor of the QR factorization of a matrix.

\subsection{Smoothed variant of Bl-BiCGGRrQ}

Finally, we apply CIRS to Bl-BiCGGRrQ, which is a stabilized variant of Bl-BiCGGR.

The original Bl-BiCGGR aims to reduce the residual gap of Bl-BiCGSTAB. 
The basic concept is to reformulate the recursion formulas as follows: 
\begin{align*} 
X_{k+1} 
&= X_k' + \omega_k R_k' = X_k + P_k\alpha_k^\square + \omega_kR_k - \omega_k AP_k\alpha_k^\square\\
&= (X_k + \omega_k R_k) + U_k,\quad U_k:=(P_k -\omega_k AP_k)\alpha_k^\square,\\
R_{k+1} &= [R_k - \omega_k(AR_k)] - AU_k. 
\end{align*}
Thus, Bl-BiCGGR can use the forms \eqref{X_R}. 
However, as discussed in \cref{sec2}, the residual gap may become large when the maximum of the residual norms is relatively large. 
Note that $\omega_k \in \mathbb{R}$ is determined by minimizing $\|R_k - \omega_k(AR_k)\|$ instead of $\|R_k' - \omega_k(AR_k')\|$.

\begin{algorithm}[t]
\caption{Smoothed block BiCGSTABpQ (S-Bl-BiCGSTABpQ)}\label{alg4}
\begin{algorithmic}[1]
\STATE Select an initial guess $X$ and compute $R = B-AX$.
\STATE Set $\hat Y := X$, $\hat S := R$, $\hat V := O$, and $\hat \zeta := 0$.
\STATE Select $R_0^{\bullet}$ and compute $Z_0^{\bullet} = A^\top R_0^{\bullet}$.
\STATE Set $P := R$, $R' := O$, and $\omega := 0$.
\WHILE{$\|\hat S\| > tol$}
	\STATE $Q = \textbf{qf}(P),\quad \sigma = {Z_0^{\bullet}}^\top Q$
	\STATE Solve $\sigma \alpha = {R_0^{\bullet}}^\top R$ for $\alpha$.
	\STATE $\hat P = \omega R' + Q\alpha,\quad \hat V = \hat \zeta \hat V + \hat P,\quad \hat U = A \hat V$
	\STATE $\hat \eta = \langle \hat S, \hat U\rangle_F / \langle \hat U, \hat U \rangle_F,\quad \hat \zeta = 1-\hat \eta$
	\STATE $\hat Y = \hat Y + \hat \eta \hat V,\quad \hat S = \hat S - \hat \eta \hat U$
	\STATE $X' = \hat Y + \hat \zeta \hat V,\quad R' = \hat S - \hat \zeta \hat U$
	\STATE Solve $V \alpha = R - R'$ for $V$.
	\STATE $T = A R',\quad \omega = \langle R', T\rangle_F / \langle T, T \rangle_F$
	\STATE $X = X' + \omega R',\quad R = R' - \omega T$
	\STATE Solve $\sigma \beta = {R_0^{\bullet}}^\top T$ for $\beta$.
	\STATE $P = R - (Q - \omega V)\beta$
\ENDWHILE
\end{algorithmic}
\end{algorithm}

\begin{algorithm}[t]
\caption{Smoothed block BiCGGRrQ (S-Bl-BiCGGRrQ)}\label{alg5}
\begin{algorithmic}[1]
\STATE Select an initial guess $X$ and compute $R = B-AX$.
\STATE Set $\hat Y := X$, $\hat S := R$, $\hat V := O$, and $\hat \zeta := 0$.
\STATE Select $R_0^{\bullet}$ and compute $[Q, \xi] = \textbf{qr}(R)$, $W = AQ$, and $\rho_- = {R_0^{\bullet}}^\top Q$.
\STATE Set $P := Q$ and $V := W$.
\WHILE{$\|\hat S\| > tol$}
	\STATE Solve $({R_0^{\bullet}}^\top V) \alpha = \rho_-$ for $\alpha$.
	\STATE $\omega = \langle W, Q\rangle_F / \langle W, W \rangle_F, \quad U = (P - \omega V)\alpha$	
	\STATE $\hat P = (\omega Q + U)\xi,\quad \hat V = \hat \zeta \hat V + \hat P,\quad \hat U = A\hat V$
	\STATE $\hat \eta = \langle \hat S, \hat U\rangle_F / \langle \hat U, \hat U \rangle_F,\quad \hat \zeta = 1-\hat \eta$
	\STATE $\hat Y = \hat Y + \hat \eta \hat V,\quad \hat S = \hat S - \hat \eta \hat U$
	\STATE $X = \hat Y + \hat \zeta \hat V,\quad R = \hat S - \hat \zeta \hat U$
	\STATE Solve $T \xi = R$ for $T$.
	\STATE $Z = Q - \omega W - T$
	\STATE $[Q, \xi] = \textbf{qr}(R),\quad W = AQ,\quad \rho_+ = {R_0^{\bullet}}^\top Q$
	\STATE Solve $\rho_- \gamma = \rho_+ / \omega$ for $\gamma$.
	\STATE $\rho_- = \rho_+,\quad P = Q + U\gamma,\quad V = W + Z\gamma$
\ENDWHILE
\end{algorithmic}
\end{algorithm}

However, as Bl-BiCGGR exhibits numerical instabilities for a large $s$, a stabilized variant Bl-BiCGGRrQ, in which the columns of $R_k$ are orthonormalized, has also been developed. 
This method updates the approximation using the form 
\begin{align*} 
X_{k+1} &= X_k + (\omega_k Q_k + U_k) \xi_k, 
\end{align*}
where $Q_k \xi_k$ corresponds to the QR factorization of the residual $R_k$. 
In the actual computation, the residual is not computed explicitly; instead, the column-orthonormal matrix $Q_k$ and upper triangular matrix $\xi_k$ are computed by 
\begin{align*} 
[Q_{k+1}, \tau_{k+1}] = \textbf{qr}(Q_k - \omega_k AQ_k -AU_k),\quad \xi_{k+1} = \tau_{k+1} \xi_k, 
\end{align*}
where \textbf{qr}($\cdot$) denotes the QR factorization of a matrix, and the first and second values of \textbf{qr}($\cdot$) are the Q- and R-factors, respectively. 
For details, we refer the reader to \cite{Tadano_2017}.

Subsequently, we apply CIRS to Bl-BiCGGRrQ by setting $\hat P_k: = (\omega_k Q_k + U_k) \xi_k$. 
The resulting smoothed Bl-BiCGGRrQ method (S-Bl-BiCGGRrQ), which also requires no additional multiplications by $A$, is presented in \Cref{alg5}. 
 
As $R_{k+1}$ is explicitly obtained instead of the matrix $Q_k - \omega_k AQ_k -AU_k$ in CIRS, we solve the system $T_{k+1} \xi_k = R_{k+1}$ to obtain the matrix $T_{k+1} := Q_{k+1} \tau_{k+1}$ in line~12, and compute $Z_k := AU_k = Q_k - \omega_k W_k - T_{k+1}$ in line~13, where $W_k := AQ_k$.

\section{Application of global methods to Sylvester equation}\label{sec6}

Global methods can easily be applied to general linear matrix equations \cite{Beik_2011,Jbilou_1999,Zhang_2010}. 
We describe how to apply the presented global algorithms to the Sylvester equation 
\begin{align}
AX - XC = B, \label{AX-XC=B}
\end{align}
where $A \in \mathbb{R}^{n\times n}$, $C \in \mathbb{R}^{s\times s}$, $B := [\bm{b}_1, \bm{b}_2, \dots, \bm{b}_s] \in \mathbb{R}^{n\times s}$, and $X := [\bm{x}_1, \bm{x}_2, \dots, \bm{x}_s] \in \mathbb{R}^{n\times s}$ with $s \ll n$. Moreover, we discuss the residual gap when solving \eqref{AX-XC=B}.

The Sylvester equation \eqref{AX-XC=B} can be represented by a standard linear system $\tilde A \bm{x} = \bm{b}$, where $\tilde A := I_s \otimes A - C^\top \otimes I_n \in \mathbb{R}^{ns\times ns}$ and $\bm{b} := [\bm{b}_1^\top,\bm{b}_2^\top,\dots,\bm{b}_s^\top]^\top$. 
Subsequently, the linear transformation by $\tilde A$ for an arbitrary $\bm{v}:= [\bm{v}_1^\top,\bm{v}_2^\top,\dots,\bm{v}_s^\top]^\top \in \mathbb{R}^{ns}$, $\bm{v}_i\in \mathbb{R}^n$ can be expressed by a linear operator $\mathcal A$ that is defined as $\mathcal A(V) := AV - VC$, where $V := [\bm{v}_1,\bm{v}_2,\dots,\bm{v}_s]$. 
Therefore, the application of the standard Krylov subspace methods to $\tilde A \bm{x} = \bm{b}$ corresponds to the application of their global counterparts to $\mathcal A(X) = B$. 
Such global methods can be implemented by replacing the multiplications with $A$ in \eqref{AX=B} by the transformations with $\mathcal A$; for example, the initial residual is defined as $R_0:=B-\mathcal A(X_0) = B-(AX_0 - X_0C)$. 
Moreover, as $\tilde A^\top = I_s \otimes A^\top - C \otimes I_n$, the adjoint of $\mathcal A$ with respect to the inner product $\langle \cdot, \cdot \rangle_F$ can be defined as $\mathcal A^\top(V) := A^\top V - V C^\top$.

At this point, we reconsider the recursion formulas \eqref{X_R} used in the global methods. 
When solving \eqref{AX-XC=B}, the recursions \eqref{X_R} are replaced with 
\begin{align}
X_{k+1} = X_k + \alpha_k P_k,\quad R_{k+1} = R_k - \alpha_k \mathcal A(P_k) = R_k - \alpha_k (AP_k - P_kC), \label{X_R_Sylv}
\end{align}
where $\alpha_k \in \mathbb{R}$ and $P_k \in \mathbb{R}^{n\times s}$. 
Using these reformulations, we can implement the global methods and their smoothed variants for solving \eqref{AX-XC=B}. 
Thereafter, similar to \Cref{Theorem1}, we can evaluate the residual gap in finite precision arithmetic.

\begin{theorem}\label{Theorem6}
Let $X_k \in \mathbb{F}^{n\times s}$ and $R_k \in \mathbb{F}^{n\times s}$ be the $k$th approximation and residual, respectively, which are generated by \eqref{X_R_Sylv} in finite precision arithmetic. 
Then, the norm of the residual gap $G_{R_k} := (B - \mathcal A(X_k)) - R_k$ is bounded as follows: 
\begin{align}
\|G_{R_k}\| \leq k{\bf u}[(7+2m)\|A\| + (7+2s)\|C\|] \max_{0<j\leq k}\|X_j\| + 5(k+1){\bf u}\max_{0\leq j \leq k}\|R_j\|. \label{gap5} 
\end{align}
\end{theorem}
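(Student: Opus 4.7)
The plan is to mirror the proof of \Cref{Theorem1} directly, since \eqref{X_R_Sylv} differs from \eqref{X_R} only in the residual update, where the term $\alpha_{k-1}AP_{k-1}$ is replaced by $\alpha_{k-1}(AP_{k-1} - P_{k-1}C)$. The approximation update is identical, so the local error bound $\|E_{X_k}\| \leq \mathbf{u}(3\|X_{k-1}\| + 2\|X_k\|)$ carries over verbatim. All the new work is to re-evaluate the local error $E_{R_k}$ in the residual recursion, and to propagate the operator $\mathcal{A}$ (rather than $A$) through the telescoping step.

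First, I would decompose the finite-precision evaluation of $R_k$ into its five elementary operations: $T_1 := \text{fl}(AP_{k-1})$, $T_2 := \text{fl}(P_{k-1}C)$, $T_3 := \text{fl}(T_1 - T_2)$, $T_4 := \text{fl}(\alpha_{k-1}T_3)$, and $R_k := \text{fl}(R_{k-1} - T_4)$. Applying the four matrix-operation models from \cref{sec2} — in particular using $\text{fl}(P\tilde\alpha)$ with $\tilde\alpha = C \in \mathbb{F}^{s\times s}$ to bound the error in $T_2$ by $s\mathbf{u}\|P_{k-1}\|\|C\|$ — and using the identity $\|\alpha_{k-1}P_{k-1}\| = \|X_k - X_{k-1}\| \leq \|X_{k-1}\|+\|X_k\|$ to convert $\|P_{k-1}\|$ factors into approximation norms, the combined error satisfies
\begin{align*}
\|E_{R_k}\| \leq \mathbf{u}(3\|R_{k-1}\| + 2\|R_k\|) + \mathbf{u}[(m+1)\|A\| + (s+1)\|C\|](\|X_{k-1}\| + \|X_k\|),
\end{align*}
after dropping $O(\mathbf{u}^2)$ terms. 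The extra ``$+1$'' on each of the $\|A\|$ and $\|C\|$ coefficients (relative to \Cref{Theorem1}) tracks the additional rounding error introduced by the inner subtraction $T_3$.

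Next, as in the proof of \Cref{Theorem1}, I would telescope the recursion to obtain
\begin{align*}
G_{R_k} = (B-\mathcal{A}(X_k)) - R_k = -\sum_{j=1}^k \mathcal{A}(E_{X_j}) + \sum_{j=1}^k E_{R_j},
\end{align*}
since $B - \mathcal{A}(X_0) - R_0 = O$. Using submultiplicativity of the Frobenius norm to bound $\|\mathcal{A}(E_{X_j})\| \leq (\|A\|+\|C\|)\|E_{X_j}\|$, summing the bound on $\|E_{X_j}\|$ yields a contribution of $5k\mathbf{u}(\|A\|+\|C\|)\max_{0<j\leq k}\|X_j\|$, while summing the new bound on $\|E_{R_j}\|$ yields $2k\mathbf{u}[(m+1)\|A\|+(s+1)\|C\|]\max_{0<j\leq k}\|X_j\| + 5(k+1)\mathbf{u}\max_{0\leq j\leq k}\|R_j\|$. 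Combining the two $\|X_j\|$ contributions gives the coefficient $(5+2(m+1))\|A\| + (5+2(s+1))\|C\| = (7+2m)\|A\| + (7+2s)\|C\|$, producing \eqref{gap5}.

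The main obstacle is strictly bookkeeping: there is no conceptual novelty beyond \Cref{Theorem1}, but one must carefully (i) verify which matrix-multiplication model applies to $\text{fl}(P_{k-1}C)$ so that the factor $s$ — rather than $m$ — appears with $\|C\|$, (ii) account for the extra inner subtraction in $T_3$ that produces the ``$+2$'' increment in both coefficients, and (iii) invoke submultiplicativity consistently so that the same $(\|A\|+\|C\|)$ factor multiplies $\|E_{X_j}\|$ without introducing a cross term involving both $m$ and $s$.
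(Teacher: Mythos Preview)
Your proposal is correct and follows essentially the same route as the paper's own proof: the paper also carries over the $X_k$ local-error bound from \Cref{Theorem1} unchanged, decomposes $\text{fl}(\mathcal A(P_{k-1}))$ into the errors from $\text{fl}(AP_{k-1})$, $\text{fl}(P_{k-1}C)$, and the inner subtraction (yielding exactly your $\|E_{R_k}\|$ bound with the $(1+m)\|A\|+(1+s)\|C\|$ coefficient), and then telescopes via linearity of $\mathcal A$ together with $\|\mathcal A(E_{X_j})\|\le(\|A\|+\|C\|)\|E_{X_j}\|$. The arithmetic combining $5(\|A\|+\|C\|)$ from the $E_{X_j}$ sum with $2[(m+1)\|A\|+(s+1)\|C\|]$ from the $E_{R_j}$ sum to reach $(7+2m)\|A\|+(7+2s)\|C\|$ is identical.
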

\begin{proof}
As in the proof of \Cref{Theorem1}, the local errors in the updated approximation and residual can be evaluated as follows: 
\begin{align*}
X_k 
&= X_{k-1} + \alpha_{k-1}P_{k-1} + E_{X_k},\quad \|E_{X_k}\| \leq {\bf u} (3\|X_{k-1}\| + 2\|X_k\|), \\
R_k 
&= R_{k-1} - \text{fl}(\alpha_{k-1}\text{fl}(\mathcal A(P_{k-1}))) - E_{R_k}',\quad \|E_{R_k}'\| \leq {\bf u} (2\|R_{k-1}\| + \|R_k\|) \\
&= R_{k-1} - \alpha_{k-1}\text{fl}(\mathcal A(P_{k-1})) - E_{R_k}' - E_{R_k}'',\quad \|E_{R_k}''\| \leq {\bf u} (\|R_{k-1}\| + \|R_k\|). 
\end{align*}
The result of $\alpha_{k-1}\text{fl}(\mathcal A(P_{k-1})) =\alpha_{k-1}\text{fl}( \text{fl}(AP_{k-1}) - \text{fl}(P_{k-1}C) )$ is expressed as 
\begin{align*}
&\alpha_{k-1}\text{fl}(\mathcal A(P_{k-1})) \\
&\quad= \alpha_{k-1}\text{fl}(AP_{k-1}) - \alpha_{k-1}\text{fl}(P_{k-1}C)-\alpha_{k-1}E_1,\\
&\phantom{\quad=}\quad \|\alpha_{k-1} E_1\| \leq {\bf u} |\alpha_{k-1}| (\|A\| + \|C\|)\|P_{k-1}\| \leq {\bf u} (\|A\| + \|C\|)(\|X_{k-1}\| + \|X_k\|)\\
&\quad= (\alpha_{k-1}AP_{k-1} + \alpha_{k-1}E_2) - (\alpha_{k-1}P_{k-1}C + \alpha_{k-1}E_3) -\alpha_{k-1}E_1,\\
&\phantom{\quad=}\quad \|\alpha_{k-1} E_2\| \leq m {\bf u} |\alpha_{k-1}| \|A\|\|P_{k-1}\|\leq m{\bf u} \|A\|(\|X_{k-1}\| + \|X_k\|),\\
&\phantom{\quad=}\quad \|\alpha_{k-1} E_3\| \leq s {\bf u} |\alpha_{k-1}| \|P_{k-1}\|\|C\|\leq s{\bf u} \|C\|(\|X_{k-1}\| + \|X_k\|)\\
&\quad= \alpha_{k-1}\mathcal A(P_{k-1}) + \alpha_{k-1}E_{R_k}''',\\
&\phantom{\quad=}\quad \|\alpha_{k-1}E_{R_k}'''\| \leq {\bf u}\left[ (1 + m)\|A\| + (1+s)\|C\| \right](\|X_{k-1}\| + \|X_k\|),
\end{align*}
where $E_{R_k}''' := E_2 - E_3 - E_1$. 
Thus, the local errors in $R_k$ are evaluated as follows: 
\begin{align*}
&R_k = R_{k-1} - \alpha_{k-1}\mathcal A(P_{k-1}) - E_{R_k},\\
&\quad \|E_{R_k}\| \leq {\bf u} (3\|R_{k-1}\| + 2\|R_k\|) + {\bf u}\left[ (1 + m)\|A\| + (1+s)\|C\| \right](\|X_{k-1}\| + \|X_k\|), 
\end{align*}
where $E_{R_k}:=E_{R_k}' + E_{R_k}'' + \alpha_{k-1}E_{R_k}'''$. 
Using the linearity of the operator $\mathcal A$, the norm of the residual gap can be bounded as follows: 
\begin{align*}
&\|(B-\mathcal A(X_{k})) - R_k\|\\
&\quad= \|B - \mathcal A(X_{k-1} + \alpha_{k-1}P_{k-1} + E_{X_k})- (R_{k-1} - \alpha_{k-1}\mathcal A(P_{k-1}) - E_{R_k})\|\\
&\quad= \|(B-\mathcal A(X_{k-1})) - R_{k-1} - (AE_{X_k} - E_{X_k}C) + E_{R_k}\|\\
&\quad \leq (\|A\| + \|C\|) \sum_{j=1}^k\|E_{X_j}\| + \sum_{j=1}^k\|E_{R_j}\|\\
&\quad\leq {\bf u}[(7+2m)\|A\| + (7+2s)\|C\|] \sum_{j=1}^k\|X_j\| + 5{\bf u}\sum_{j=0}^k\|R_j\|.
\end{align*}
Note that $(B-\mathcal A(X_0)) - R_0 = O$ holds for $X_0 = O$. 
The proof is completed by bounding $\sum_{j=1}^k\|X_j\|$ and $\sum_{j=0}^k\|R_j\|$ with $k\max_j \|X_j\|$ and $(k+1)\max_j \|R_j\|$, respectively.
\end{proof}

As the upper bound in \eqref{gap5} contains the term $\max_j \|R_j\|$ similarly to \eqref{gap1}, it is important to reduce the maximum of the residual norms to reduce the residual gap. 
When using CIRS, as $X_j$ and $R_j$ in \eqref{gap5} are replaced by $\hat Y_j$ and $\hat S_j$, respectively, the smoothed method is expected to have a smaller residual gap.

\section{Numerical experiments}\label{sec7}

We present numerical experiments that were conducted to demonstrate that a large residual gap occurs when a large relative residual norm exists in the global and block Lanczos-type solvers, which can be improved by CIRS.
We compared the convergence of Gl-CGS2, Gl-BiCGSTAB, Bl-BiCGSTABpQ, and Bl-BiCGGRrQ and their smoothed variants S-Gl-CGS2, S-Gl-BiCGSTAB, S-Bl-BiCGSTABpQ, and S-Bl-BiCGGRrQ (i.e., \Cref{alg2,alg3,alg4,alg5}) using several model problems.

Numerical calculations were carried out in double-precision floating-point arithmetic on a PC (Intel Core i7-8650U CPU with 16 GB of RAM) equipped with MATLAB R2018a. 
The iterations were started with $X_0 = O$ and were stopped when the relative norms of the recursively updated residuals ($\|R_k\|/\|B\|$ for the primary methods and $\|\hat S_k\|/\|B\|$ for the smoothed methods) were less than $10^{-14}$. 
We used several test matrices. 
One of these was a Toeplitz matrix $A=[a_{ij}] \in \mathbb{R}^{2000\times 2000}$ that was defined as $a_{ii} := 2$, $a_{i, i+1} := 1$, and $a_{i+4, i} := -1$ for each $i$ (otherwise, $a_{ij} := 0$). 
The other matrices were obtained from the SuiteSparse Matrix Collection \cite{Davis_2011}. 
\Cref{table_matrix} presents the dimension ($n$), number of nonzero entries (nnz), maximum number of nonzero entries per row ($m$), and the two-norm condition number ($\kappa_2(A)$). 
The Toeplitz matrix, cdde2, pde2961, and bfwa782 were used as the coefficient matrices of \eqref{AX=B}, and fs\_680\_1 and can\_24 were used for $A$ and $C$, respectively, in \eqref{AX-XC=B}. 
The right-hand side $B$ was provided as a random matrix and the initial shadow residual $R_0^{\bullet}$ was set to $R_0$ ($=B$) for all of the methods. 
Another shadow residual $R_0^{\circ}$ that was used in Gl-CGS2 and S-Gl-CGS2 was set to a random matrix. 
The conditions that were set for each experiment are described in the following.

The implementation of the compared methods naively followed the provided algorithms. In particular, we used the slash or backslash command in MATLAB to solve the small linear systems that appeared in the block methods. 
Moreover, the MATLAB command \textbf{qr}($\cdot$) was used to perform the QR factorization.

\begin{table}[t]
\centering
{\small
\caption{Characteristics of test matrices from SuiteSparse Matrix Collection \cite{Davis_2011}.}\label{table_matrix}
\begin{tabular}{llrrrr} \toprule
Target problem & Matrix & $n$ & nnz & $m$ & $\kappa_2(A)$ \\ \midrule
\multirow{3}{*}{Linear system \eqref{AX=B}} & cdde2 & 961 & 4,681 & 5 & 5.5e+01 \\
 & pde2961 & 2,961 & 14,585 & 5 & 6.4e+02 \\ 
 & bfwa782 & 782 & 7,514 & 24 & 1.7e+03 \\ \midrule 
\multirow{2}{*}{Sylvester equation \eqref{AX-XC=B}} & fs\_680\_1 & 680 & 2,184 & 8 & 1.5e+04 \\ 
 & can\_24 & 24 & 160 & 9 & 7.8e+01 \\ \bottomrule
\end{tabular}
}
\end{table}

\begin{figure}[t]
		\begin{minipage}{0.49\textwidth}
			\centering
			\includegraphics[scale=0.3]{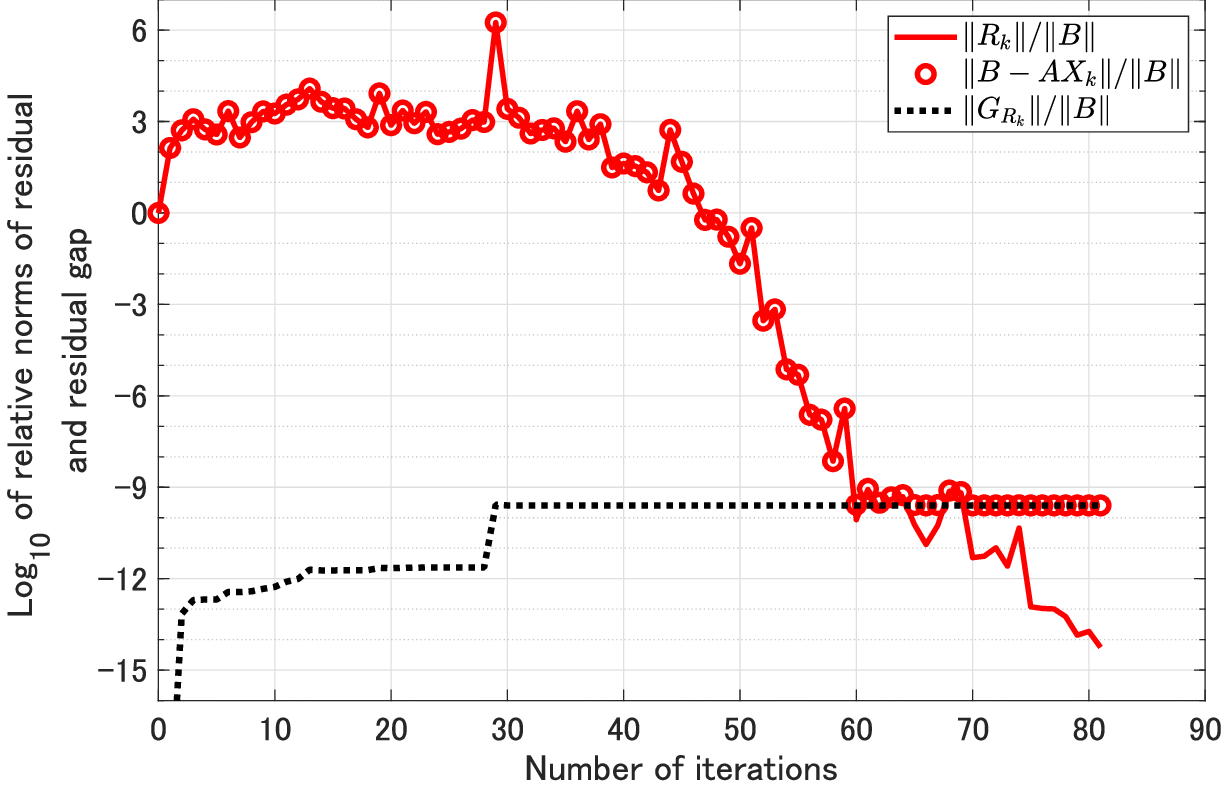}
		\end{minipage}
		\begin{minipage}{0.49\textwidth}
			\centering
			\includegraphics[scale=0.3]{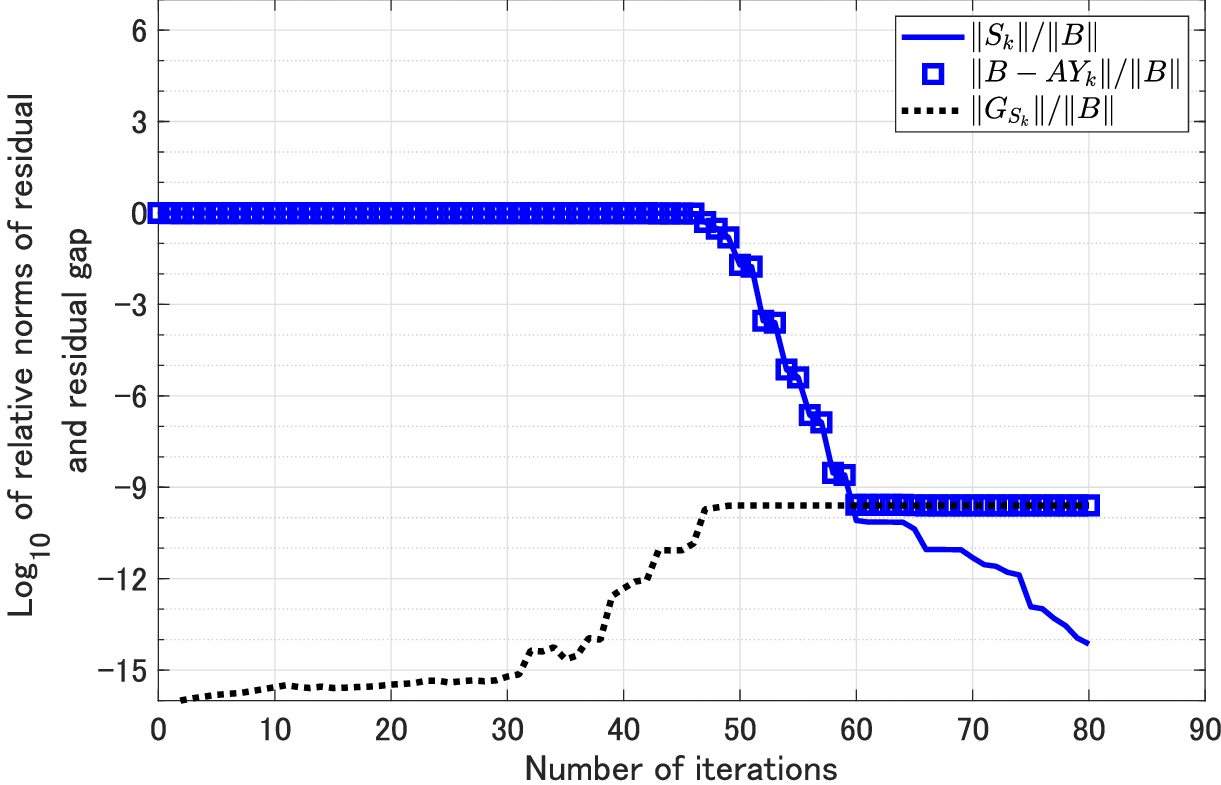}
		\end{minipage}
		\caption{Convergence histories of non-smoothed Gl-CGS2 (left) and smoothed Gl-CGS2 using SRS (right) for cdde2 with $s=16$.}\label{Ex1_1}
		\begin{minipage}{0.49\textwidth}
			\centering
			\includegraphics[scale=0.3]{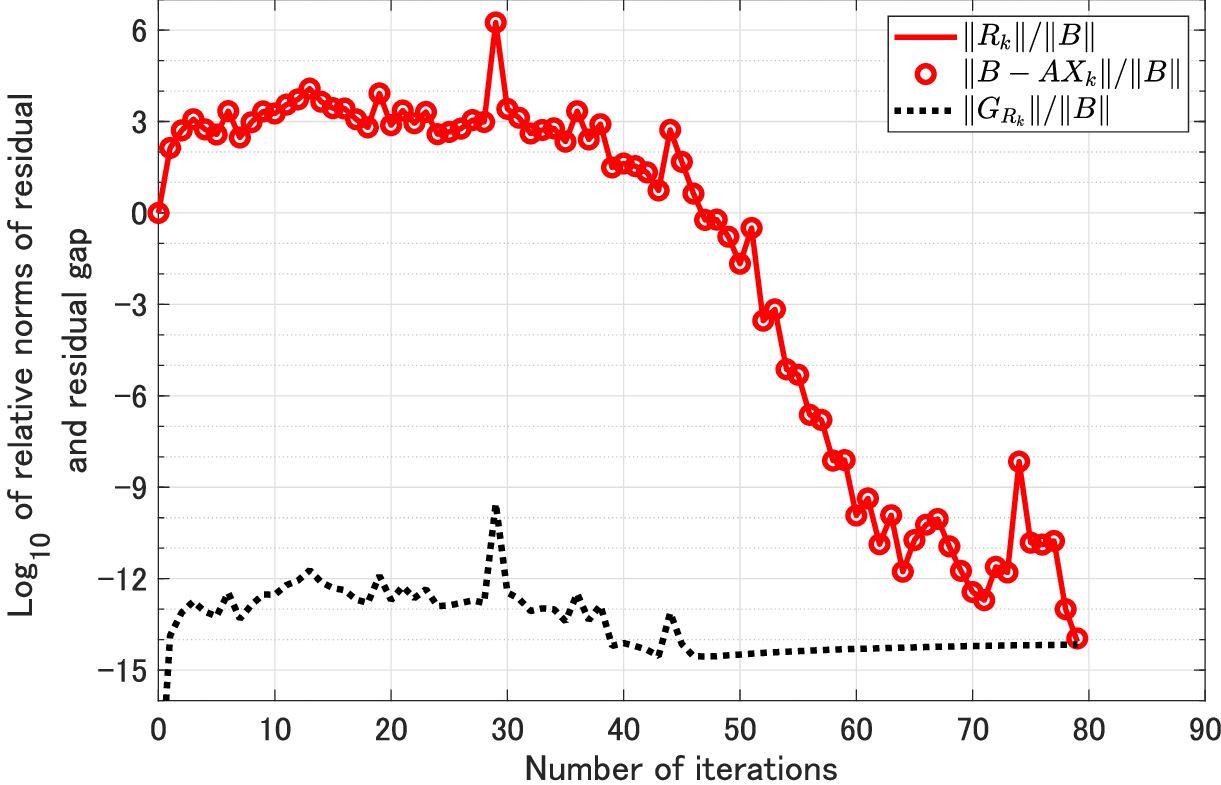}
		\end{minipage}
		\begin{minipage}{0.49\textwidth}
			\centering
			\includegraphics[scale=0.3]{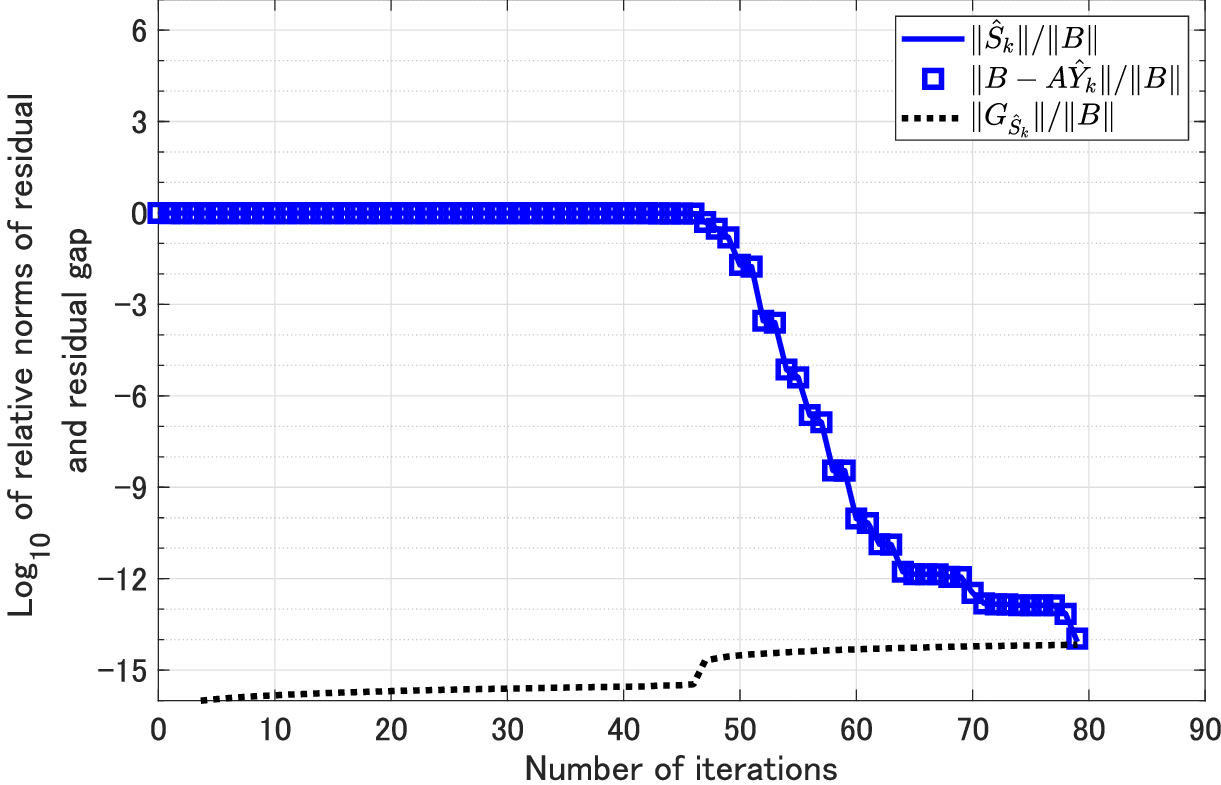}
		\end{minipage}
		\caption{Convergence histories of proposed S-Gl-CGS2 for cdde2 with $s=16$: primary sequences (left) and smoothed sequences (right).}\label{Ex1_2}
\end{figure}

\subsection{Differences between simple residual smoothing and CIRS}\label{sec7.1}

Following \cite{Aihara_2019}, we first present the advantages of CIRS compared to the simple residual smoothing scheme. 
We compared the convergence of Gl-CGS2 and its smoothed variants for \eqref{AX=B}, where $A$ was set to cdde2 and $s$ was set to 16. 
In the following, the simple residual smoothing \eqref{S-W} with \eqref{smo_eta} is referred to as SRS.

The left panel of \Cref{Ex1_1} depicts the histories of the relative norms of the recursively updated residuals $R_k$, true residuals $B-AX_k$, and residual gap $G_{R_k}$ of the non-smoothed Gl-CGS2. 
The plots indicate the number of iterations on the horizontal axis versus $\log_{10}$ of the relative norms on the vertical axis. 
The right panel of \Cref{Ex1_1} depicts the corresponding histories of the smoothed Gl-CGS2 using SRS. 
Here, the evaluated quantities are associated with $Y_k$ and $S_k$ in the smoothed sequences instead of $X_k$ and $R_k$ in the primary sequences, respectively. 
\Cref{Ex1_2} displays the histories for the proposed S-Gl-CGS2 using CIRS; the left and right panels present the primary and smoothed sequences obtained in lines~12 and 11, respectively, in \Cref{alg2}.

The following can be observed from \Cref{Ex1_1,Ex1_2}: 
Gl-CGS2 has a large relative residual norm, which results in a large residual gap, leading to a loss of attainable accuracy of the approximations, as indicated by \Cref{Theorem1}. 
In particular, $\|R_k\|$ increases drastically at the first and 29th iterations, and $\|G_{R_k}\|$ increases accordingly. 
The smoothed Gl-CGS2 using SRS exhibits smooth convergence behavior, but the residual gap is not improved; although $\|G_{S_k}\|$ is relatively small for $\|R_k\| \gg \|B\|$, it increases after 30 iterations and reaches the same order of magnitude as $\|G_{R_k}\|$ of Gl-CGS2 at the 47th iteration for $\|R_k\| \approx \|B\|$. 
This phenomenon follows \Cref{Theorem3}. 
In contrast, the behavior of $\|G_{R_k}\|$ in the proposed S-Gl-CGS2 is similar to that of $\|R_k\|$; that is, as indicated by \Cref{Theorem5}, $\|G_{R_k}\|$ increases drastically with a large increase in $\|R_k\|$, but decreases as $\|R_k\|$ becomes smaller. 
In the final iterations, $\|G_{R_k}\|$ of S-Gl-CGS2 is much smaller than that of Gl-CGS2. 
Furthermore, $\|\hat S_k\|$ of S-Gl-CGS2 converges smoothly with a small residual gap throughout the iterations, as indicated by \Cref{Theorem4}. 
It should be noted that the final sizes of $\|G_{R_k}\|$ and $\|G_{\hat S_k}\|$ in S-Gl-CGS2 are of the same order of magnitude.

\begin{figure}[t]
		\begin{minipage}{0.49\textwidth}
			\centering
			\includegraphics[scale=0.3]{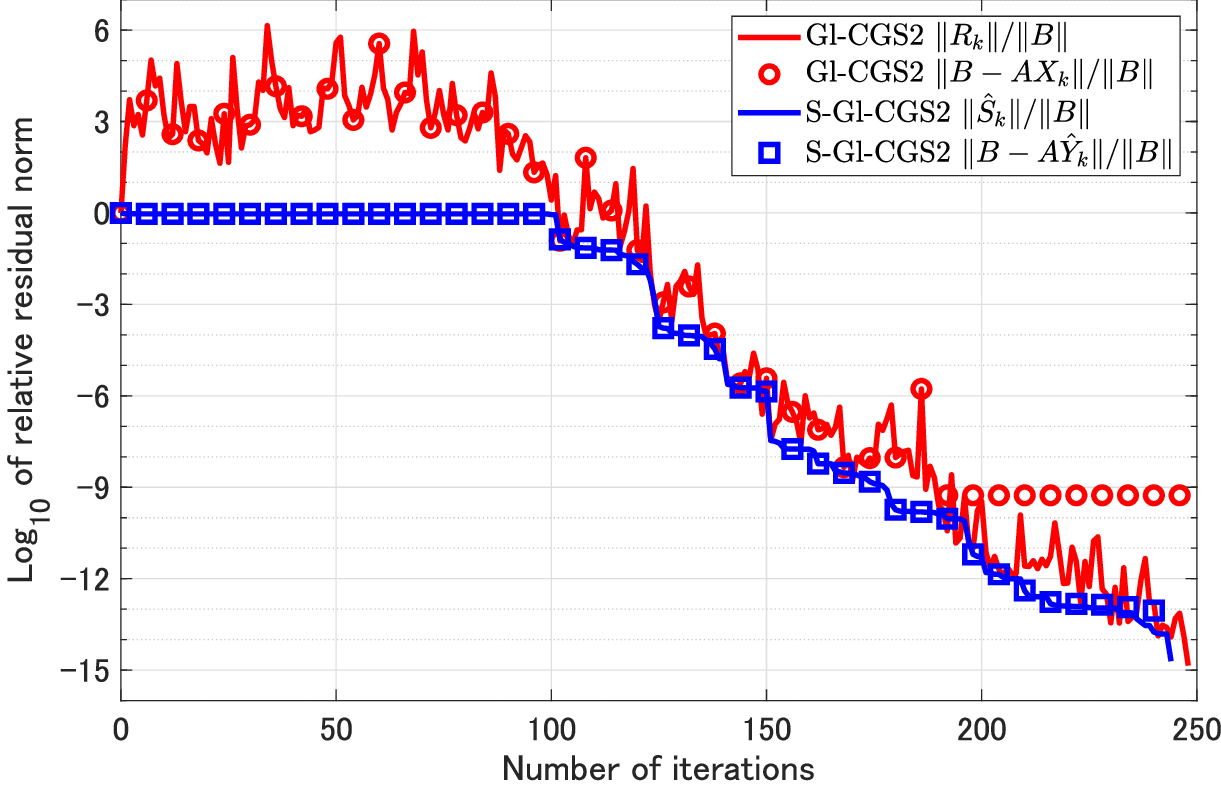}
		\end{minipage}
		\begin{minipage}{0.49\textwidth}
			\centering
			\includegraphics[scale=0.3]{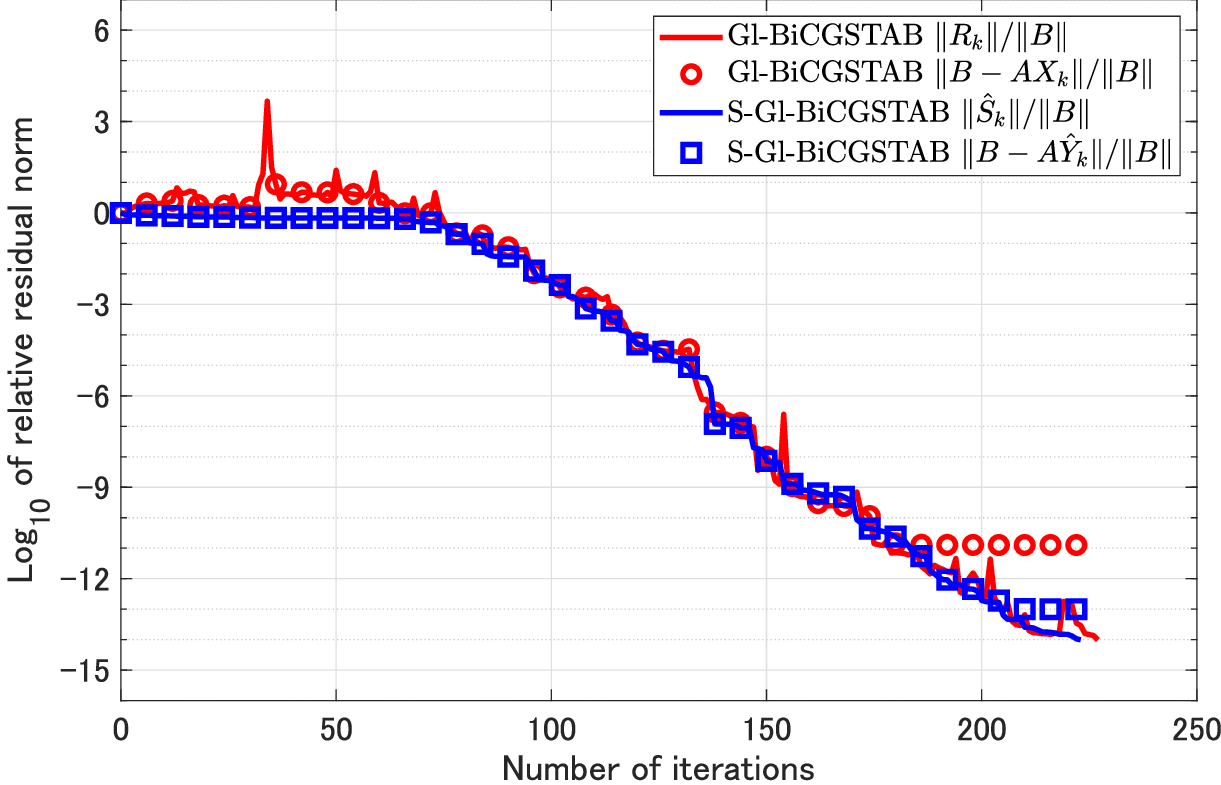}
		\end{minipage}
		\caption{Convergence histories of Gl-CGS2 and S-Gl-CGS2 (left), and Gl-BiCGSTAB and S-Gl-BiCGSTAB (right) for pde2961 with $s=16$.}\label{Ex2}
\end{figure}

\begin{table}[t]
\centering
{\small 
\caption{Number of iterations and true relative residual norm of Gl-CGS2, S-Gl-CGS2, Gl-BiCGSTAB, and S-Gl-BiCGSTAB for test matrices.}\label{Ex2_Table}
\begin{tabular}{llrrrrrr} \toprule
 & & \multicolumn{2}{c}{$s=8$} & \multicolumn{2}{c}{$s=16$} & \multicolumn{2}{c}{$s=32$} \\ \cmidrule{3-8}
Matrix & Solver & Iter. & True res. & Iter. & True res. & Iter. & True res. \\ \midrule
\multirow{4}{*}{Toeplitz}	&	Gl-CGS2	&	1373	&	3.5e$-$12	&	1257	&	4.0e$-$12	&	1183	&	5.9e$-$13	\\	
	&	S-Gl-CGS2	&	1155	&	1.9e$-$14	&	1097	&	2.0e$-$14	&	1120	&	1.9e$-$14	\\	
	&	Gl-BiCGSTAB	&	1277	&	1.1e$-$12	&	1252	&	3.9e$-$12	&	1243	&	5.9e$-$14	\\	
	&	S-Gl-BiCGSTAB	&	1306	&	2.2e$-$14	&	1289	&	2.2e$-$14	&	1304	&	2.1e$-$14	\\	\midrule
\multirow{4}{*}{cdde2}	&	Gl-CGS2	&	87	&	2.9e$-$11	&	81	&	2.5e$-$10	&	84	&	4.7e$-$11	\\	
	&	S-Gl-CGS2	&	82	&	9.0e$-$15	&	79	&	1.1e$-$14	&	81	&	1.2e$-$14	\\	
	&	Gl-BiCGSTAB	&	89	&	3.3e$-$14	&	96	&	3.8e$-$14	&	90	&	7.5e$-$14	\\	
	&	S-Gl-BiCGSTAB	&	88	&	1.1e$-$14	&	94	&	1.2e$-$14	&	90	&	1.3e$-$14	\\	\midrule
\multirow{4}{*}{pde2961}	& Gl-CGS2	&	221	&	1.6e$-$10	&	248	&	5.4e$-$10	&	228	&	5.6e$-$10	\\	
	&	S-Gl-CGS2	&	214	&	8.4e$-$14	&	244	&	8.8e$-$14	&	235	&	8.5e$-$14	\\	
	&	Gl-BiCGSTAB	&	220	&	2.9e$-$13	&	227	&	1.3e$-$11	&	225	&	2.1e$-$13	\\	
	&	S-Gl-BiCGSTAB	&	220	&	1.0e$-$13	&	223	&	1.0e$-$13	&	213	&	9.8e$-$14	\\	\midrule
\multirow{4}{*}{bfwa782} &	Gl-CGS2	&	356	&	3.5e$-$11	&	354	&	2.3e$-$11	&	356	&	3.7e$-$11	\\	
	&	S-Gl-CGS2	&	357	&	1.6e$-$13	&	354	&	1.7e$-$13	&	331	&	1.7e$-$13	\\	
	&	Gl-BiCGSTAB	&	$\dagger$	&	8.3e$-$13	&	$\dagger$	&	1.1e$-$12	&	1551	&	2.4e$-$12	\\	
	&	S-Gl-BiCGSTAB	&	$\dagger$	&	3.5e$-$13	&	1561	&	3.7e$-$13	&	$\dagger$	&	4.0e$-$13	\\	\bottomrule
\end{tabular}
}
\end{table}

\subsection{Experiments on smoothed global Lanczos-type solvers}\label{sec7.2}

We compared the convergences of Gl-CGS2, S-Gl-CGS2, Gl-BiCGSTAB, and S-Gl-BiCGSTAB for \eqref{AX=B} to demonstrate the effectiveness of the smoothed global methods, where $A$ was set to the Toeplitz matrix, cdde2, pde2961, and bfwa782 and $s$ was set to 8, 16, and 32. 
We set the maximum number of iterations to $2n$.

\Cref{Ex2} depicts the convergence histories of the relative norms of the recursively updated residuals and true residuals for pde2961 with $s=16$. 
The plots illustrate the number of iterations on the horizontal axis versus $\log_{10}$ of the relative residual norm on the vertical axis. 
For Gl-BiCGSTAB in the right panel, the maximum of the residual norms in the polynomial part was of the same order of magnitude as that in the BiCG part. 
For S-Gl-CGS2 and S-Gl-BiCGSTAB, we plotted the smoothed residual norms that were obtained from lines~11 and 9 of \Cref{alg2,alg3}, respectively. 
The true residual norms were plotted with markers using every 6th point.  
\Cref{Ex2_Table} displays the number of iterations required for successful convergence and the true relative residual norm at termination. 
The symbol $\dagger$ indicates that no convergence occurred within $2n$ iterations.

The following can be observed from \Cref{Ex2} and \Cref{Ex2_Table}: 
Gl-CGS2 often has a large residual gap and exhibits a loss of attainable accuracy of the approximations. 
Moreover, S-Gl-CGS2 has a relatively smaller residual gap and provides more accurate approximations in all cases. 
Although Gl-BiCGSTAB often has a smaller residual gap than Gl-CGS2, the residual gaps of S-Gl-BiCGSTAB and S-Gl-CGS2 are not larger than those of Gl-BiCGSTAB. 
The convergence speed of the smoothed methods is not significantly different from that of their non-smoothed counterparts. 
Gl-BiCGSTAB and S-Gl-BiCGSTAB fail to converge for bfwa782. 
For this problem, Gl-CGS2 and S-Gl-CGS2 are more robust, and S-Gl-CGS2 has a smaller residual gap.

\subsection{Experiments on Sylvester equation}\label{sec7.3}

We applied Gl-CGS2, S-Gl-CGS2, Gl-BiCGSTAB, and S-Gl-BiCGSTAB to the Sylvester equation \eqref{AX-XC=B}. 
Following \cite{Zhang_2010}, we set the matrices $A$ and $C$ in \eqref{AX-XC=B} to fs\_680\_1 and can\_24, respectively. 
\Cref{Ex3_Table} displays the number of iterations required for successful convergence and the true relative residual norm at termination.

\begin{table}[t]
\centering
{\small 
\caption{Number of iterations and true relative residual norm of Gl-CGS2, S-Gl-CGS2, Gl-BiCGSTAB, and S-Gl-BiCGSTAB for Sylvester equation.}\label{Ex3_Table}
\begin{tabular}{lrr} \toprule
Solver & Iter. & True res. \\ \midrule
Gl-CGS2	&	466	& 	5.5e$-$11	\\	
S-Gl-CGS2	&	484	& 	1.1e$-$14	\\	
Gl-BiCGSTAB	&	1113	 & 	6.0e$-$14	\\	
S-Gl-BiCGSTAB	&	1133	&	1.8e$-$14	\\	\bottomrule
\end{tabular}
}
\end{table}

It can be observed from \Cref{Ex3_Table} that although Gl-BiCGSTAB and S-Gl-BiCGSTAB have a small residual gap, many iterations are required. 
However, Gl-CGS2 converges much faster than Gl-BiCGSTAB, as observed in \cite{Zhang_2010}. 
Moreover, Gl-CGS2 has a large residual gap and exhibits a loss of attainable accuracy of the approximations. 
S-Gl-CGS2 converges as rapidly as Gl-CGS2 and has a smaller residual gap, and thus, it provides more accurate approximations.

\subsection{Experiments on smoothed block Lanczos-type solvers}\label{sec7.4}

We compared the convergences of Bl-BiCGSTABpQ, S-Bl-BiCGSTABpQ, Bl-BiCGGRrQ, and S-Bl-BiCGGRrQ for \eqref{AX=B} to demonstrate the effectiveness of the smoothed block methods. 
We refer the reader to \cref{sec7.2} for the computational conditions.

\Cref{Ex4} displays the convergence histories for cdde2 with $s=32$. 
\Cref{Ex4_Table} presents the number of iterations and the true relative residual norm at termination. 
For the plots of the figures and the notations in the table, we refer the reader to \cref{sec7.2}.

The following can be observed from \Cref{Ex4} and \Cref{Ex4_Table}: 
Bl-BiCGSTABpQ and Bl-BiCGGRrQ converge in all cases and their convergence speeds are comparable. 
However, there are cases in which both non-smoothed methods have a large relative residual norm as well as a large residual gap. 
Furthermore, S-Bl-BiCGSTABpQ and S-Bl-BiCGGRrQ exhibit smooth convergence behavior, a reduced residual gap, and more accurate approximations than their non-smoothed counterparts in most cases. 
We note that S-Bl-BiCGGRrQ sometimes decelerates the convergence speed for a larger $s$; in particular, it does not converge for bfwa782 with $s = 16$ and $32$. 
In the final subsection, we consider the cause of this phenomenon and present a strategy for its improvement.

\begin{figure}[t]
		\begin{minipage}{0.49\textwidth}
			\centering
			\includegraphics[scale=0.3]{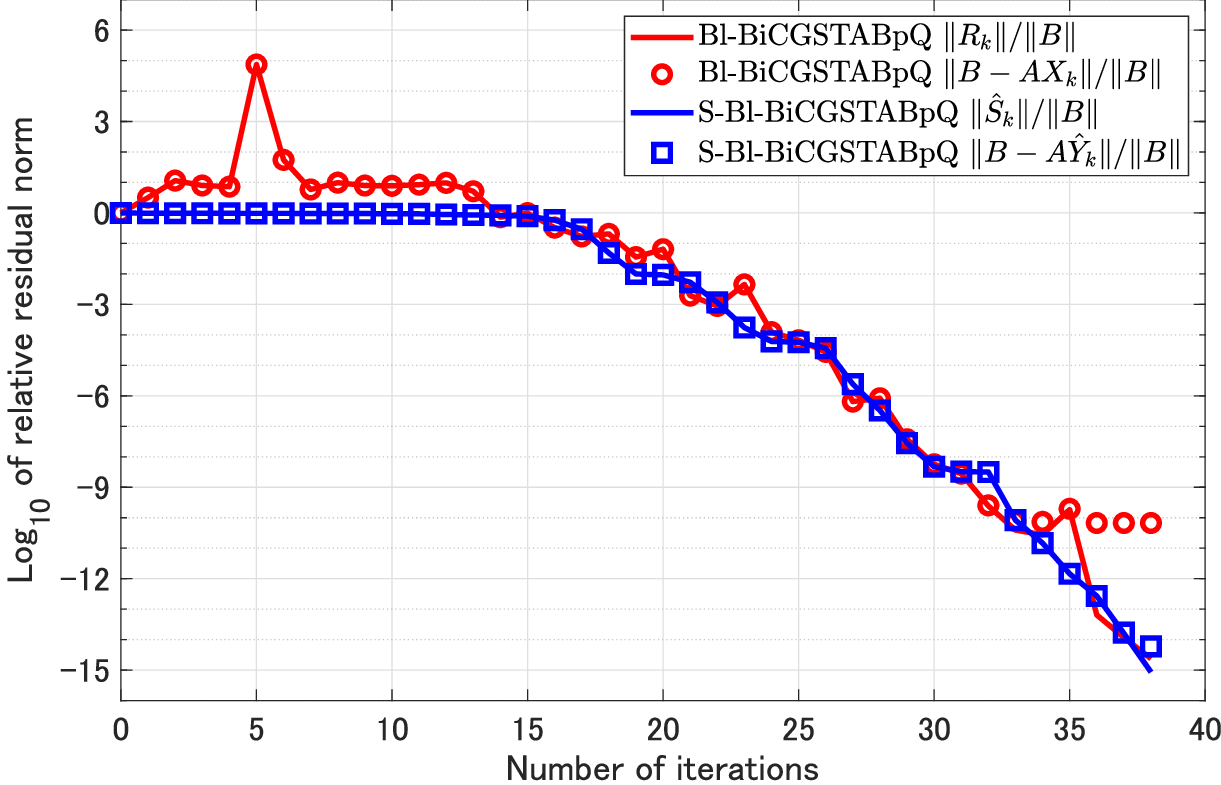}
		\end{minipage}
		\begin{minipage}{0.49\textwidth}
			\centering
			\includegraphics[scale=0.3]{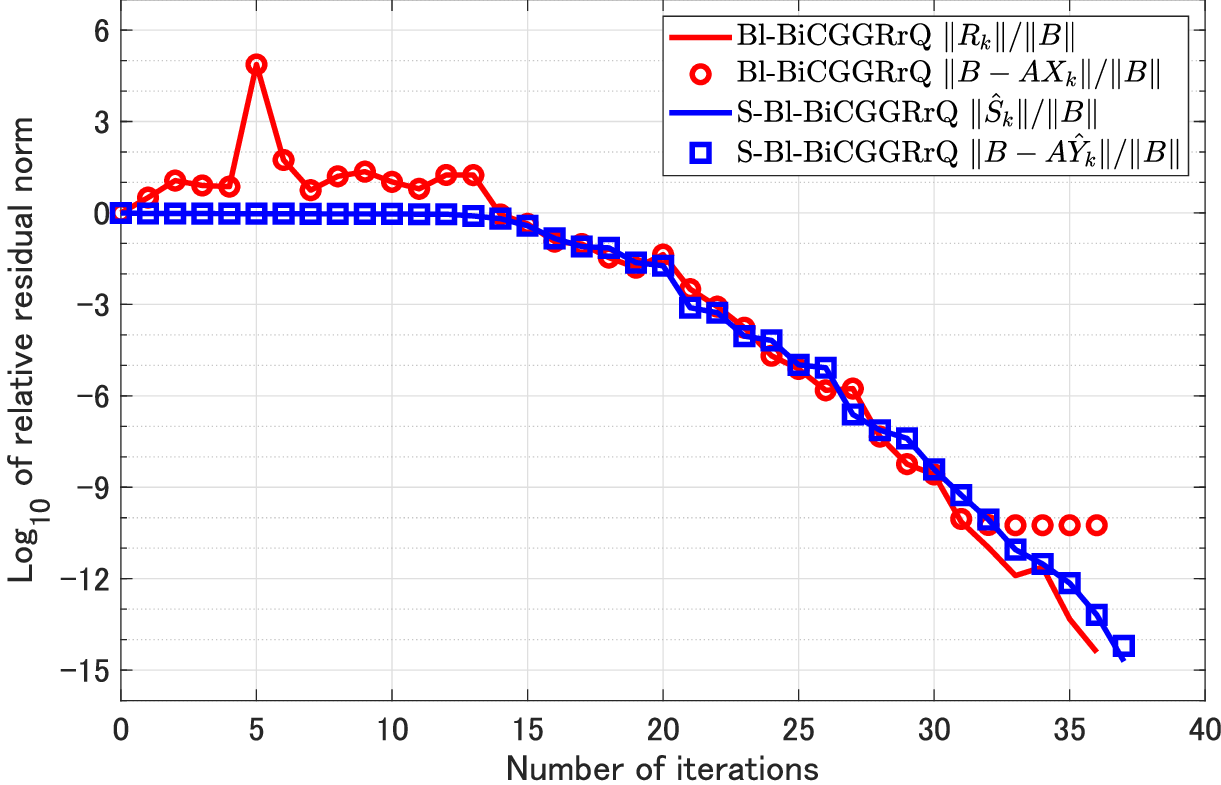}
		\end{minipage}
		\caption{Convergence histories of Bl-BiCGSTABpQ and S-Bl-BiCGSTABpQ (left), and Bl-BiCGGRrQ and S-Bl-BiCGGRrQ (right) for cdde2 with $s=32$.}\label{Ex4}
\end{figure}

\begin{table}[t]
\centering
{\small 
\caption{Number of iterations and true relative residual norm of Bl-BiCGSTABpQ, S-Bl-BiCGSTABpQ, Bl-BiCGGRrQ, and S-Bl-BiCGGRrQ for test matrices.}\label{Ex4_Table}
\begin{tabular}{llrrrrrr} \toprule
 & & \multicolumn{2}{c}{$s=8$} & \multicolumn{2}{c}{$s=16$} & \multicolumn{2}{c}{$s=32$} \\ \cmidrule{3-8}
Matrix & Solver & Iter. & True res. & Iter. & True res. & Iter. & True res. \\ \midrule
\multirow{4}{*}{Toeplitz}	&	Bl-BiCGSTABpQ	&	253	&	1.4e$-$13	&	123	&	1.4e$-$13	&	69	&	3.0e$-$13	\\	
	&	S-Bl-BiCGSTABpQ	&	224	&	1.1e$-$14	&	123	&	9.1e$-$15	&	67	&	5.4e$-$15	\\	
	&	Bl-BiCGGRrQ	&	227	&	9.3e$-$14	&	132	&	1.2e$-$13	&	70	&	4.8e$-$14	\\	
	&	S-Bl-BiCGGRrQ	&	220	&	1.3e$-$14	&	121	&	1.0e$-$14	&	68	&	6.7e$-$15	\\	\midrule
\multirow{4}{*}{cdde2}	&	Bl-BiCGSTABpQ	&	65	&	2.9e$-$14	&	55	&	1.6e$-$13	&	38	&	6.7e$-$11	\\	
	&	S-Bl-BiCGSTABpQ	&	64	&	1.0e$-$14	&	54	&	8.0e$-$15	&	38	&	6.0e$-$15	\\	
	&	Bl-BiCGGRrQ	&	63	&	1.3e$-$12	&	53	&	1.3e$-$13	&	36	&	5.7e$-$11	\\	
	&	S-Bl-BiCGGRrQ	&	63	&	1.0e$-$14	&	52	&	1.1e$-$14	&	37	&	6.3e$-$15	\\	\midrule
\multirow{4}{*}{pde2961}	&	Bl-BiCGSTABpQ	&	113	&	3.7e$-$13	&	86	&	5.0e$-$12	&	65	&	5.0e$-$13	\\	
	&	S-Bl-BiCGSTABpQ	&	120	&	7.4e$-$14	&	87	&	6.1e$-$14	&	62	&	5.1e$-$14	\\	
	&	Bl-BiCGGRrQ	&	136	&	2.3e$-$13	&	126	&	3.7e$-$13	&	74	&	1.1e$-$12	\\	
	&	S-Bl-BiCGGRrQ	&	123	&	7.6e$-$14	&	120	&	7.4e$-$14	&	92	&	6.5e$-$14	\\	\midrule
\multirow{4}{*}{bfwa782}	&	Bl-BiCGSTABpQ	&	80	&	2.9e$-$12	&	57	&	1.2e$-$12	&	37	&	1.6e$-$12	\\	
	&	S-Bl-BiCGSTABpQ	&	86	&	8.5e$-$14	&	66	&	7.8e$-$14	&	43	&	6.6e$-$14	\\	
	&	Bl-BiCGGRrQ	&	97	&	2.5e$-$12	&	63	&	9.8e$-$13	&	34	&	1.2e$-$12	\\	
	&	S-Bl-BiCGGRrQ	&	88	&	8.7e$-$14	&	$\dagger$	&	4.6e$-$10	&	$\dagger$	&	3.0e$-$13	\\	\bottomrule
\end{tabular}
}
\end{table}

\subsection{Strategy of partial application of CIRS}

In S-Bl-BiCGGRrQ, the system $T_{k+1} \xi_k = R_{k+1}$ needs to be solved for $T_{k+1}$ at each iteration, where $\xi_k$ is the R-factor of the QR factorization of $R_k$. 
This may cause numerical instability when the condition number of $\xi_k$ becomes large, leading to a loss of convergence speed, as can be observed in \Cref{Ex4_Table}. 
This difficulty can probably be remedied by the partial application of CIRS. 
Specifically, we perform CIRS only when the condition $\|\hat S_k\|/\|B\| > \theta$ holds for a threshold value $\theta$; otherwise, we use the standard updating process. 
This simple strategy enables us to avoid the inversion of an ill-conditioned matrix $\xi_k$, and the convergence speed is expected to be maintained. 
Moreover, the residual gap is expected to be reduced even in the partially smoothed iteration process, because the essence of CIRS is to suppress the maximum of the residual norms. 
This strategy can also be applied to S-Bl-BiCGSTABpQ.

We conducted numerical experiments to demonstrate the effectiveness of the smoothed methods using the above switching strategy. 
The threshold value $\theta$ was set to $10^{-2}$ and we refer the reader to \cref{sec7.4} for the other computational conditions.

\Cref{Ex5} displays the convergence histories of the relative norms of the recursively updated residuals of Bl-BiCGGRrQ and S-Bl-BiCGGRrQ with and without the switching strategy for bfwa782 with $s=16$. 
The plots indicate the number of iterations on the horizontal axis versus $\log_{10}$ of the relative residual norm on the left vertical axis. 
In \Cref{Ex5}, we also display the history of the condition number of $\xi_k$, where $\log_{10}$ of the condition number is plotted on the right vertical axis. 
\Cref{Ex5_Table} depicts the number of iterations and the true relative residual norm at termination for S-Bl-BiCGSTABpQ and S-Bl-BiCGGRrQ with the switching strategy.

It can be observed from \Cref{Ex5} that S-Bl-BiCGGRrQ with the switching strategy converges at the same speed as Bl-BiCGGRrQ, whereas S-Bl-BiCGGRrQ without the strategy decelerates the convergence speed with an increase in the condition number of $\xi_k$. 
By comparing \Cref{Ex4_Table,Ex5_Table}, it can be observed that the switching strategy is useful for maintaining the convergence speed while improving the attainable accuracy. 
We note that the selection of the threshold value $\theta$ does not cause a severe problem. 
In our experience, similar effects can be observed for $\theta = 10^{-1}, 10^{-2}$, and $10^{-3}$, 
for example.

\begin{figure}[t]
\centering
\includegraphics[scale=0.3]{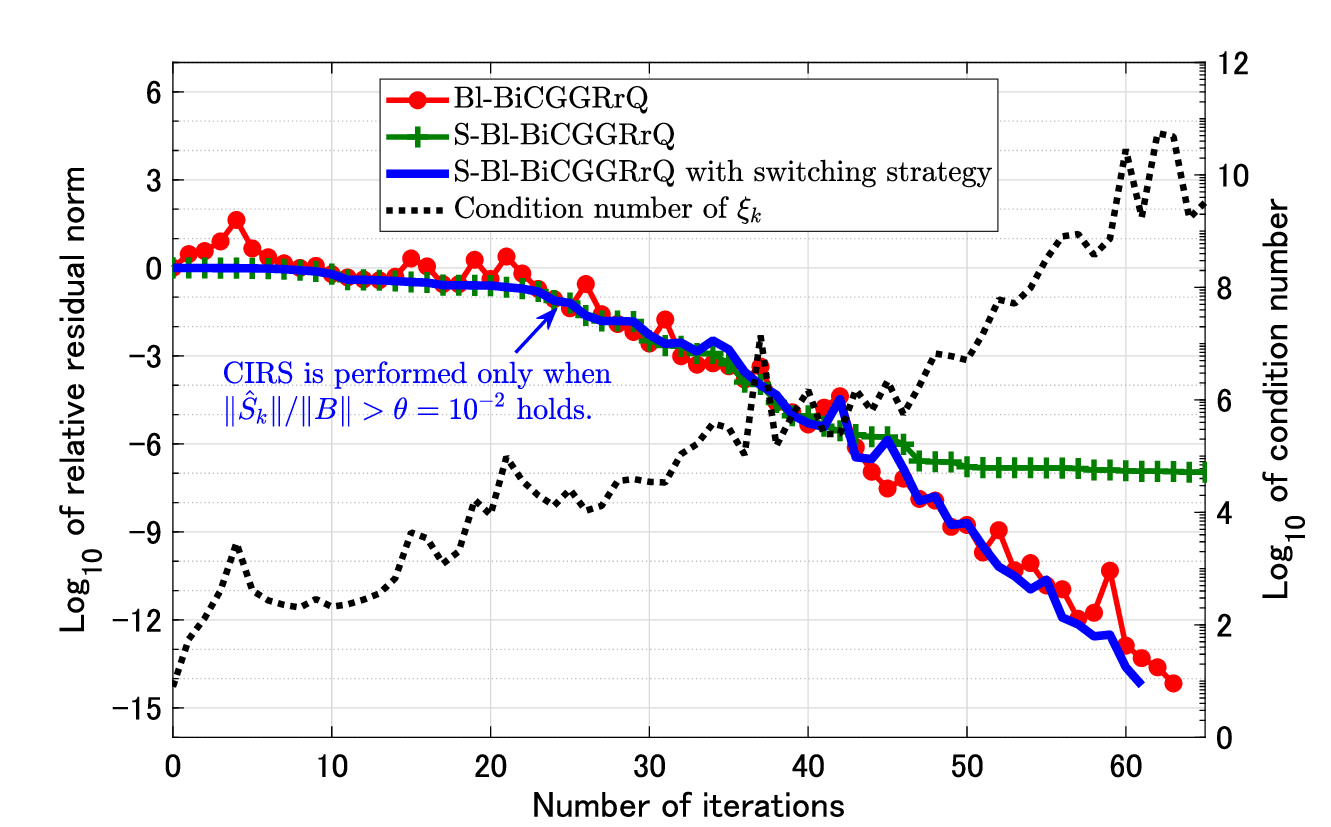}
\caption{Convergence histories of Bl-BiCGGRrQ and S-Bl-BiCGGRrQ with and without switching strategy, and history of the condition number of $\xi_k$ for bfwa782 with $s=16$.}\label{Ex5}
\end{figure}

\begin{table}[t]
\centering
{\small
\caption{Number of iterations and true relative residual norm of S-Bl-BiCGSTABpQ and S-Bl-BiCGGRrQ with switching strategy for test matrices.}\label{Ex5_Table}
\begin{tabular}{llrrrrrr} \toprule
 & & \multicolumn{2}{c}{$s=8$} & \multicolumn{2}{c}{$s=16$} & \multicolumn{2}{c}{$s=32$} \\ \cmidrule{3-8}
Matrix & Solver & Iter. & True res. & Iter. & True res. & Iter. & True res. \\ \midrule
\multirow{2}{*}{Toeplitz}	&	S-Bl-BiCGSTABpQ	&	228	&	1.2e$-$14	&	119	&	1.1e$-$14	&	67	&	5.3e$-$15	\\	
	&	S-Bl-BiCGGRrQ	&	239	&	1.1e$-$14	&	113	&	1.0e$-$14	&	64	&	1.7e$-$14	\\	\midrule
\multirow{2}{*}{cdde2}	&	S-Bl-BiCGSTABpQ	&	64	&	1.1e$-$14	&	53	&	9.5e$-$15	&	37	&	1.1e$-$14	\\	
	&	S-Bl-BiCGGRrQ	&	63	&	8.4e$-$15	&	52	&	7.7e$-$15	&	36	&	1.1e$-$14	\\	\midrule
\multirow{2}{*}{pde2961}	&	S-Bl-BiCGSTABpQ	&	122	&	9.4e$-$14	&	86	&	7.7e$-$14	&	61	&	6.5e$-$14	\\	
	&	S-Bl-BiCGGRrQ	&	121	&	7.5e$-$14	&	107	&	7.0e$-$14	&	73	&	5.6e$-$14	\\	\midrule
\multirow{2}{*}{bfwa782}	&	S-Bl-BiCGSTABpQ	&	84	&	1.1e$-$13	&	64	&	9.7e$-$14	&	43	&	8.4e$-$14	\\	
	&	S-Bl-BiCGGRrQ	&	90	&	9.2e$-$14	&	61	&	7.5e$-$14	&	37	&	6.0e$-$14	\\	\bottomrule
\end{tabular}
}
\end{table}

\section{Concluding remarks}\label{sec8}

We have presented a rounding error analysis to show that global and block Lanczos-type solvers may have a large residual gap when a large relative residual norm exists during the iterations. 
To reduce the residual gap, we extended cross-interactive residual smoothing for a single linear system to the case of multiple right-hand sides and designed several smoothed algorithms for these solvers. 
The proposed algorithms can be implemented with few additional costs compared to their non-smoothed counterparts.
The numerical experiments demonstrated that the smoothed variants have a smaller residual gap and provide more accurate approximations than their original counterparts.

In this study, we restricted the rounding error analysis to the case of real numbers. 
However, our results are also valid for the complex case, because the error bounds for basic complex arithmetic have similar forms to those of real arithmetic (cf.~\cite[section~3.6]{Higham_2002}). 
A detailed discussion on this point will be provided in future studies.

\end{document}